\documentclass[11pt]{amsart}

\usepackage{amsmath,amsthm,amsfonts,amssymb}

\usepackage{color}
\usepackage{amsaddr}

\newtheorem{thm}[equation]{Theorem}
\newtheorem{cor}[equation]{Corollary}
\newtheorem{lem}[equation]{Lemma}
\newtheorem{prop}[equation]{Proposition}

\theoremstyle{definition}

\numberwithin{equation}{section}

\definecolor{mjo}{rgb}{0,0,.9}

\newcommand{\cc}{\mathbb{C}}
\newcommand{\z}{\mathbb{Z}}

\newcommand{\s}{\mathfrak{sl}}

\newcommand{\B}{\mathfrak{b}}
\newcommand{\g}{\mathfrak{g}}

\newcommand{\Vir}{{\rm Vir}}

\newcommand{\Ind}{{\rm Ind}}
\newcommand{\Res}{{\rm Res}}

\definecolor{mjo}{rgb}{.4,0,.9}

\begin{document}



\title[]{The restriction of polynomial modules for the Virasoro algebra to $\mathfrak{sl}_2 ( \cc )$}

\author[Matthew Ondrus]{Matthew Ondrus}
\address{\noindent Mathematics Department,
Weber State University,
Ogden, UT  84408 USA,  \emph{E-mail address}: \tt{mattondrus@weber.edu}}

\author[Emilie Wiesner]{Emilie Wiesner*}
\address{\noindent  Department of Mathematics, Ithaca College, Ithaca, NY 14850, USA, \ \  \emph{E-mail address}: {\tt ewiesner@ithaca.edu} *Corresponding Author.}


\date{}

\begin{abstract}
The Lie algebra $\s_2 ( \cc )$ may be regarded in a natural way as a subalgebra of the infinite-dimensional Virasoro Lie algebra, so it is natural to consider connections between the representation theory of the two algebras.  In this paper, we explore the restriction to $\s_2 ( \cc )$ of certain induced modules for the Virasoro algebra.  Specifically, we consider Virasoro modules induced from so-called polynomial subalgebras, and we show that the restriction of these modules results in twisted versions of familiar modules such as Verma modules and Whittaker modules.  
\end{abstract}

\keywords{Virasoro algebra; induced modules; automorphism twists}
\subjclass[2010]{17B68 (primary); 17B10 and 17B65 (secondary) }
\maketitle

\section{Introduction}

The Lie algebras $\s_2(\cc)$ and the Virasoro algebra are important exemplars in Lie theory. Both algebras have been well-studied and have a richly developed representation theory, including highest weight modules and Whittaker modules.

The Lie algebra $\s_2(\cc)$ naturally embeds in the Virasoro algebra, which suggests questions about how the representation theory of the two algebras are connected.   One might expect that a reasonable starting place would be with known families of modules that have been well-studied for both the Virasoro algebra and $\s_2 (\cc )$, such as highest weight modules and Whittaker modules. When restricted to $\s_2 (\cc )$, however, these $\Vir$-modules tend to be quite large and complicated. (See \cite{KR} and  \cite{ondWies2009}, and \cite{mazZhao2014}  for descriptions of highest-weight and Whittaker modules for $\Vir$, respectively.) 

Martin and Prieto \cite{marPrie} have given some general conditions for which weight $\s_2(\cc)$-modules can be lifted to $\Vir$-modules, by first relating the representation theory of $\s_2 ( \cc )$ to the representation of certain subalgebras ${\bf Witt}_>$ and ${\bf Witt}_<$; they also provide concrete connections between some families of $\s_2 (\cc)$- and $\Vir$-weight modules. 
Dong, Lin, and Mason \cite{DLM} have studied a piece of the reverse question, investigating the nature of vertex operator algebras (which are Virasoro modules) when restricted to $\s_2(\cc)$.   

In this paper, we focus on the restriction of ``polynomial modules," certain non-weight $\Vir$-modules, to $\s_2(\cc)$; these modules were introduced in \cite{ondWies18} as a generalization of the modules in $\Omega ( \lambda , b)$ studied in \cite{LZ14}.  The polynomial modules are induced from ``polynomial" subalgebras which have finite codimension in $\Vir$; and  in \cite{ondWies18}, it is shown that these subalgebras include all subalgebras of codimension one.
Here we prove that if $\s_2(\cc)$ is regarded as a subalgebra of $\Vir$ in a canonical way,  polynomial modules restrict to non-trivial twists (via $\s_2(\cc)$-automorphisms) of several classical families of $\s_2(\cc)$-modules, including Verma and Whittaker modules. 

These nontrivial twists of familiar $\s_2(\cc)$-modules can be induced from subalgebras of $\s_2(\cc)$; the subalgebras obviously have finite codimension, and subalgebras of a given codimension can be related via an $\s_2(\cc)$-automorphism.
Thus, the results of the paper can be alternatively summarized in the following way.  With the given canonical embedding of $\s_2(\cc)$ in $\Vir$, we show that most $\s_2(\cc)$-modules induced from nontrivial, proper subalgebras -- specifically those that are not subalgebras of $\B^+$, or $\B^-$ (see notation in Lemmas \ref{lem:2dimSubs} and \ref{lem:onedim}) -- can be lifted to a $\Vir$-module induced from a subalgebra of the same codimension.   If the $\s_2(\cc)$-module is induced from a subalgebra of codimension $1$, the lift is unique.  If the $\s_2(\cc)$-module is induced from a subalgebra of codimension $2$, there is an infinite family of lifts indexed by $\cc$.

Section \ref{sec:sl2} includes a variety of results on induced modules for $\s_2(\cc)$. While these results are generally known, we provide some non-standard perspectives on these modules that lead to connections to $\Vir$-modules. (These connections are the ultimate focus of the paper.)  In particular, we describe the one and two-dimensional subalgebras of $\s_2(\cc)$ as automorphism-twists of the standard Cartan, positive nilpotent, and Borel subalgebras. We then review families of modules induced from these subalgebras, including twists of Whittaker modules, Verma modules, and a family of modules $X( \xi )$ induced from the Cartan subalgebra.  In order to interpret later results (in Section \ref{sec:connectToVir}) in a more concrete way, we present some elementary facts about the structure of $X ( \xi )$ in Section \ref{sec:Xxi}.  For instance, we show that the modules $X ( \xi )$ have the ``dense'' weight modules found in \cite[p. 69]{Maz-sl2} as simple subquotients.  

The main results of the paper are found in Section \ref{sec:connectToVir}. They build on the fact that $\s_2(\cc)$ embeds in $\Vir$, and thus $\Vir$-modules may be restricted to $\s_2(\cc) $-modules.  We first review the results from \cite{ondWies18} on polynomial modules, which are $\Vir$-modules induced from ``polynomial" subalgebras.  These modules are distinct from highest weight modules or Whittaker modules for $\Vir$ and suggest no obvious connection to highest weight modules or Whittaker modules for $\s_2(\cc)$.  However, we prove that the polynomial modules of $\Vir$ restrict to the twists of the familiar $\s_2(\cc)$-modules described in Sections \ref{sec:1d} and \ref{sec:2d}.  As an application of these connections between $\s_2 (\cc)$ and $\Vir$, we prove Corollary \ref{cor:tensorVermas}, which describes the tensor product of two twisted $\s_2 ( \cc )$-Verma modules in terms of modules presented in Section \ref{sec:1d}.   In Section \ref{sec:UniqueLift}, we reframe the earlier results of the paper by considering which $\s_2(\cc)$-modules lift to $\Vir$-modules and how many such lifts there are.  Answers to these questions are summarized in Theorem \ref{thm:almostUnique}.

\section{$\s_2 ( \cc )$ and its modules} \label{sec:sl2}

In this section, we gather together a variety of results on induced modules for $\s_2 (\cc)$, formulated with our results on the Virasoro algebra in mind. This formulation focuses on twists, by $\s_2 (\cc)$-automorphisms, of ``standard" induced modules.  Following the usual notation, we fix a basis  $\{ e, h, f \}$ for $\s_2$ (short for $\s_2 ( \cc )$), with the usual relations $[h, e] = 2e$, $[e,f] = h$, and $[h,f] = -2f$.

The automorphisms of finite-dimensional Lie algebras have been carefully described. (See \cite[Chapter IX]{J}.) Here, we identify three automorphisms that will be of particular use in connecting subalgebras of $\s_2$ and $\Vir$.

For $\lambda \in \cc$, define  
\begin{equation} \label{eqn:gammalambda}
\gamma_\lambda (e) = e - \lambda h - \lambda^2 f, \quad \gamma_\lambda (h) = h + 2 \lambda f, \quad \gamma_\lambda ( f) = f.
\end{equation}
For $\lambda_1, \lambda_2 \in \cc$ with $\lambda_1 \neq \lambda_2$, define 
\begin{align} \label{eqn:tau}
\gamma_{\lambda_1, \lambda_2}(e) &= \frac{e - \lambda_1 h - \lambda_1^2 f}{\lambda_2 - \lambda_1},\\
 \gamma_{\lambda_1, \lambda_2} (h) &=  \frac{-2e + ( \lambda_1 + \lambda_2) h + 2 \lambda_1 \lambda_2 f}{\lambda_2 - \lambda_1}, \nonumber \\ \gamma_{\lambda_1, \lambda_2} ( f) &= \frac{- e + \lambda_2 h + \lambda_2^2 f}{\lambda_2 - \lambda_1}. \nonumber
\end{align}
Finally,  define
\begin{equation} \label{eqn:sigma}
\sigma (e) = f, \quad \sigma (f) = e, \quad \sigma (h) = -h.
\end{equation}

\subsection{One-dimensional subalgebras of $\s_2$ and induced modules} \label{sec:1d}

We now describe the modules that can be induced from one-dimensional modules for subalgebras of $\s_2$. We first construct two ``standard" induced modules and then describe all one-dimensional subalgebras of $\s_2$. Finally, we describe all modules induced from one-dimensional modules for subalgebras of $\s_2$ in relation to the standard induced modules. 

For $\eta \in \cc$, let $\cc_\eta$ be the one-dimensional $\cc e$-module given by $e1 = \eta$.  We form the $\s_2$-module $W( \eta )$ by inducing $\cc_\eta$ to an $\s_2$-module.  Specifically, define 
$$W( \eta ) = U ( \s_2 ) \otimes_{U( \cc e)} \cc_\eta.$$

If $\eta \neq 0$, then $W( \eta )$ is the \emph{standard Whittaker module}.  (See \cite{AP} for more background.) The structure of the standard Whittaker module $W(\eta)$ is well-known, even in the more general setting of complex finite-dimensional simple Lie algebras (e.g. see \cite{Kost78}).  For instance, there is a one-to-one correspondence between submodules of $W( \eta )$ and ideals of the center $\cc [c]$ of $U ( \s_2 )$, where 
$$c = 4ef + (h-1)^2 = 4fe + (h+1)^2 \in U ( \s_2 )$$ 
is the Casimir element.   In particular, if $\xi \in \cc$, then the quotient 
\begin{equation}\label{eqn:WhittSimpleQuotient}
W( \eta ) / ( c - \xi ) W( \eta )
\end{equation}
is a simple $\s_2$-module.  (See Section 3 of \cite{Kost78}.)

For $\xi \in \cc$, let $\cc_\xi$ be the one-dimensional $\cc h$-module with $h1 = \xi$.   Let 
\begin{equation}\label{eqn:defX_mu}
X(\xi) = \Ind_{\cc h}^{\s_2} ( \cc_\xi ) = U ( \s_2 ) \otimes_{U( \cc h)} \cc_\xi.
\end{equation}

The modules $X ( \xi )$ play an important role in Section \ref{sec:connectToVir} in Proposition \ref{prop:Res2} and Corollary \ref{cor:tensorVermas}, and thus we take time to understand in detail the structure of $X ( \xi )$.  Lemma \ref{lem:Xfiltration} and Proposition \ref{prop:quotientXmu} demonstrate a relationship between $X(\xi)$ and the center, $\cc[c]$, of $U(\s_2 )$, that parallels standard Whittaker modules: that is, there is a mapping between submodules of $X(\xi)$ and ideals of $\cc [c]$.  Then, in Proposition \ref{prop:twist}, we show that every $\s_2$-module induced from a one-dimensional module for a one-dimensional subalgebra of $\s_2$ is a twist of either $X(\xi)$ or $W(\eta)$.

\subsubsection{The $\s_2$-module $X(\xi)$} \label{sec:Xxi}

Let $x_\xi= 1 \otimes 1 \in X(\xi)$, so that $hx_\xi= \xi x_\xi$.  By the construction of $X(\xi)$, if $V$ is an $\s_2$-module generated by an $h$-eigenvector $v$ of weight $\xi$, then there is a surjective map $X(\xi) \to V$ with 
$$u x_\xi \mapsto u v$$
for all $u \in U ( \s_2)$.  We use a method similar to that of (\ref{eqn:WhittSimpleQuotient}) in order to obtain subquotients of $X( \xi )$ that are familiar.  Specifically, we use ideals of the center of $U(\s_2)$ generated by elements of the form $( c - \tau )^n$, where $\tau \in \cc$.

\begin{lem} \label{lem:Xfiltration}
Define $X(\xi)$ as in (\ref{eqn:defX_mu}), and let $\tau \in \cc$.  Then the following hold.  
\begin{enumerate}
\item[(i)] 
For all $0 \neq v \in X(\xi)$, $(c - \tau ) v \neq 0$.

\item[(ii)] $X(\xi) \supsetneq (c-\tau) X(\xi) \supsetneq (c-\tau)^2 X(\xi) \supsetneq \cdots$. 
\item[(iii)] $\displaystyle \bigcap_{n=1}^\infty (c-\tau)^n X(\xi)=0$.
\end{enumerate}
\end{lem}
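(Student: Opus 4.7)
The plan is to decompose $X(\xi)$ into $h$-weight spaces---each stable under $c-\tau$ since $c$ is central---and to analyze $c-\tau$ on one weight space at a time using a PBW basis. By the PBW theorem, $\{f^j e^a x_\xi : j,a \geq 0\}$ is a $\cc$-basis of $X(\xi)$, and $f^j e^a x_\xi$ has $h$-weight $\xi + 2(a-j)$. So for each $k \in \z$ the weight space $X(\xi)^{\xi+2k}$ has a one-parameter basis: I would take $w_j^{(k)} = f^j e^{j+k} x_\xi$ for $k \geq 0$ and $w_j^{(k)} = f^{j-k} e^j x_\xi$ for $k<0$ (in both cases $j \geq 0$).

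The computational core is a single explicit formula. Using $c = 4fe + (h+1)^2$, so that $c \cdot x_\xi = 4 fe\, x_\xi + (\xi+1)^2 x_\xi$, together with the commutator identity $e^a f = f e^a + a(h-a+1)e^{a-1}$ and the fact that $h$ acts as the scalar $\xi + 2a$ on $e^a x_\xi$, a short calculation should give
$$(c-\tau)\, w_j^{(k)} \;=\; 4\, w_{j+1}^{(k)} \;+\; \bigl(\mu_j^{(k)} - \tau\bigr)\, w_j^{(k)}$$
for an explicit scalar $\mu_j^{(k)} \in \cc$. Thus in the ordered basis $\{w_j^{(k)}\}_{j \geq 0}$ the operator $c-\tau$ is lower bidiagonal with constant, nonzero subdiagonal entry equal to $4$.

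With this bidiagonal description in hand, all three conclusions drop out. For (i), if a nonzero $v$ lies in a single weight space and is written $v = \sum_{j \leq N} \alpha_j w_j^{(k)}$ with $\alpha_N \neq 0$, then the coefficient of $w_{N+1}^{(k)}$ in $(c-\tau)v$ is $4\alpha_N \neq 0$; since $c-\tau$ preserves weights, splitting a general $v$ into weight components reduces to this case. Iterating the bidiagonal formula, the coefficient of $w_{N+n}^{(k)}$ in $(c-\tau)^n v$ is $4^n \alpha_N \neq 0$, so the maximum support index of $(c-\tau)^n v$ (within the given weight space) grows linearly in $n$. This yields (iii): if $v \in \bigcap_n (c-\tau)^n X(\xi)$, then each weight component $v_k$ of $v$ has bounded maximum index $N_k$, but writing $v_k = (c-\tau)^n u$ with $n > N_k$ forces $u = 0$ by the max-index count, hence $v_k = 0$ for every $k$ and $v = 0$.

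Part (ii) then follows formally from (i) and (iii): if $(c-\tau)^n X(\xi) = (c-\tau)^{n+1} X(\xi)$ for some $n$, iterating the equality gives $(c-\tau)^n X(\xi) = \bigcap_m (c-\tau)^m X(\xi) = 0$ by (iii), contradicting $(c-\tau)^n x_\xi \neq 0$ from (i) applied $n$ times. The main obstacle I anticipate is purely clerical: keeping the weight-space parameterization consistent across $k \geq 0$ and $k<0$, and running the commutator bookkeeping carefully enough that the bidiagonal form---with its nonzero $4$ on the subdiagonal---emerges without extraneous cross-terms.
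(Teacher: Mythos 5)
Your proof is correct and takes essentially the same approach as the paper: both rest on the single computation $(c-\tau)\,f^k e^l x_\xi = 4 f^{k+1}e^{l+1}x_\xi + (\text{lower-order terms})$ in the PBW basis $\{f^k e^l x_\xi\}$, from which injectivity, strict descent of the chain, and triviality of the intersection all follow by a leading-term argument. Your $h$-weight-space decomposition merely reorganizes this same bookkeeping into one-parameter bidiagonal blocks.
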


\begin{proof}
Using a standard PBW basis for $U(\s_2)$, we see that $X(\xi)$ has a basis $\{ f^k e^l {x}_\xi \mid k, l \in \z_{\geq 0} \}$.  

The desired results are then straightforward consequences of the fact that 
\begin{align}
(c-\tau) f^k e^l {x}_\xi &= f^k e^l (c-\tau) {x}_\xi \nonumber \\
&= f^k e^l (4fe) {x}_\xi + ((\xi+1)^2 - \tau) f^k e^l  {x}_\xi\nonumber \\
&= 4 f^{k+1} e^{k+1} {x}_\xi + \sigma \label{eqn:fkel}
\end{align}
where $\sigma \in \mbox{span} \{ f^i e^j x_\xi \mid i \leq k, j \leq l \}$. 
\end{proof}

\medskip 
For $n \ge 0$, define 
\begin{equation} \label{eqn:defX_mubar}
\overline X(\xi, \tau)^n =(c-\tau)^n X(\xi) / ( c - \tau )^{n+1} X(\xi).
\end{equation}
As a shorthand, we will write $\overline X(\xi, \tau):= \overline X(\xi, \tau)^0$.

We begin with several basic facts about the modules $\overline X ( \xi , \tau )^n$.

\begin{lem}\label{lem:Xmu-subQuotSame}
Let $\xi, \tau \in \cc$.  Then $\overline X(\xi, \tau)^n  \cong \overline X(\xi, \tau)^{n+1}$ for every $n \in \z_{\ge 0}$. In particular, $\overline X(\xi, \tau)^n \cong \overline X(\xi, \tau)$ for all $n  \in \z_{\ge 0}$.
\end{lem}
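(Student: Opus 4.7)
The plan is to realize the isomorphism as multiplication by $(c-\tau)$. Since $c$ is central in $U(\mathfrak{sl}_2)$, the map
$$\mu : (c-\tau)^n X(\xi) \longrightarrow (c-\tau)^{n+1} X(\xi), \qquad v \mapsto (c-\tau)v,$$
is an $\mathfrak{sl}_2$-module homomorphism. It is tautologically surjective, since every element of $(c-\tau)^{n+1}X(\xi)$ is of the form $(c-\tau)v$ for some $v\in (c-\tau)^n X(\xi)$. Moreover, $\mu$ sends $(c-\tau)^{n+1}X(\xi)$ into $(c-\tau)^{n+2}X(\xi)$, so it descends to a well-defined surjective $\mathfrak{sl}_2$-module map
$$\overline{\mu}: \overline{X}(\xi,\tau)^n \longrightarrow \overline{X}(\xi,\tau)^{n+1}.$$

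For injectivity, I would use part (i) of Lemma \ref{lem:Xfiltration}: since $(c-\tau)$ acts without kernel on all of $X(\xi)$, it in particular acts without kernel on the submodule $(c-\tau)^nX(\xi)$. Concretely, if $v\in (c-\tau)^nX(\xi)$ represents a class in $\ker\overline{\mu}$, then $(c-\tau)v = (c-\tau)^{n+2}w$ for some $w\in X(\xi)$. Then $(c-\tau)\bigl(v-(c-\tau)^{n+1}w\bigr)=0$, so Lemma \ref{lem:Xfiltration}(i) forces $v=(c-\tau)^{n+1}w\in (c-\tau)^{n+1}X(\xi)$, meaning $v$ represents the zero class in $\overline{X}(\xi,\tau)^n$. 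This gives the first isomorphism. The second statement then follows by induction on $n$.

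I do not expect significant obstacles: everything rests on the centrality of $c$ (which makes $\mu$ a module map) and on the fact, already established in Lemma \ref{lem:Xfiltration}(i), that $c-\tau$ acts injectively on $X(\xi)$. The latter is the substantive input, and it was proved via the explicit PBW formula (\ref{eqn:fkel}).
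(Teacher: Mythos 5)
Your proposal is correct and follows essentially the same route as the paper: the paper also defines the isomorphism $v + (c-\tau)^{n+1}X(\xi) \mapsto (c-\tau)v + (c-\tau)^{n+2}X(\xi)$ and invokes Lemma \ref{lem:Xfiltration} for injectivity. You have merely written out the injectivity step (which the paper leaves to the reader) in full detail, and your argument there is sound.
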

\begin{proof}
For a fixed $n \ge 0$, define a map $\varphi : \overline X(\xi, \tau)^n  \rightarrow \overline X(\xi, \tau)^{n+1}$ by 
$$v + ( c - \tau )^{n+1} X(\xi) \longmapsto (c - \tau ) v + ( c - \tau )^{n+2} X(\xi).$$
It is easy to see that $\varphi$ is well defined and surjective.  Lemma \ref{lem:Xfiltration} can be used to show that $\varphi$ is injective.
\end{proof}

\begin{lem} \label{lem:spanHmu}
Let $\overline x_\xi = x_\xi + ( c - \tau ) X(\xi) \in \overline X(\xi, \tau)$.  The set $\{e^i \overline{x}_\xi \mid i \in \z_{\geq 0} \} \cup \{f^j \overline{x}_\xi \mid i \in \z_{> 0} \}$ is a basis for the module $\overline X(\xi, \tau)$ in (\ref{eqn:defX_mubar}).
\end{lem}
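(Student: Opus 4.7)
The plan is to use the PBW basis of $X(\xi)$ together with the explicit calculation of $(c-\tau)$ on PBW monomials given in the proof of Lemma \ref{lem:Xfiltration}. Recall that $X(\xi)$ has basis $\{f^k e^l x_\xi \mid k,l \in \z_{\geq 0}\}$, and that (\ref{eqn:fkel}) can be rewritten as
\begin{equation*}
f^k e^l x_\xi \equiv -\tfrac{1}{4}\sigma \pmod{(c-\tau)X(\xi)}, \qquad k \geq 1,\ l \geq 1,
\end{equation*}
where $\sigma$ lies in the span of $\{f^i e^j x_\xi \mid i \leq k-1,\ j \leq l-1\}$ — i.e., the appearance of $f^k e^l x_\xi$ as the leading term of $(c-\tau) f^{k-1} e^{l-1} x_\xi$ lets us trade any ``mixed'' monomial for strictly lower-degree terms.

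For spanning, I would induct on the total degree $k+l$. In degree $0$, the element $x_\xi$ is already in our proposed spanning set. For a PBW monomial $f^k e^l x_\xi$ with $k+l \geq 1$: if $k=0$ or $l=0$ it is manifestly in the set; otherwise the displayed congruence expresses it, modulo $(c-\tau)X(\xi)$, as a combination of monomials of strictly smaller total degree, which the inductive hypothesis handles.

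For linear independence, suppose
\begin{equation*}
\sum_{i \geq 0} a_i e^i x_\xi + \sum_{j \geq 1} b_j f^j x_\xi = (c-\tau) v
\end{equation*}
for some $v \in X(\xi)$, and I need to force all $a_i$ and $b_j$ to vanish. Equip $X(\xi)$ with the degree filtration $\deg(f^k e^l x_\xi) = k+l$. Writing $v = \sum c_{k,l} f^k e^l x_\xi$ of top degree $N$, the identity (\ref{eqn:fkel}) shows that the degree-$(N+2)$ part of $(c-\tau)v$ is exactly $\sum_{k+l=N} 4\,c_{k,l}\, f^{k+1}e^{l+1}x_\xi$, a nonzero element by PBW independence whose monomials all satisfy $k' \geq 1$ and $l' \geq 1$. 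However, the left-hand side contains only monomials with $k=0$ or $l=0$; in particular its degree-$(N+2)$ component cannot match a nontrivial combination of ``doubly nonzero'' monomials. This contradiction forces $v=0$ and hence $a_i = b_j = 0$.

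The main obstacle is the independence argument: one must choose a filtration or ordering on the PBW basis that cleanly separates the two types of monomials (pure-$e$/pure-$f$ versus mixed) and interacts well with multiplication by $c-\tau$. The total-degree filtration works because $c-\tau$ acts by strictly increasing degree by $2$ on leading terms, but it is important to verify that the top-degree part of $(c-\tau)v$ is genuinely nonzero for $v \neq 0$, which follows from the PBW linear independence of the monomials $f^{k+1}e^{l+1}x_\xi$ with $k+l = N$.
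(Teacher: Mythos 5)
Your proof is correct, and the spanning half is essentially the paper's argument: both reduce a mixed monomial $f^k e^l x_\xi$ (with $k,l\geq 1$) modulo $(c-\tau)X(\xi)$ to lower total degree via equation (\ref{eqn:fkel}) and induct. For linear independence, however, you take a genuinely different route. The paper notes that $he^l\overline{x}_\xi=(\xi+2l)e^l\overline{x}_\xi$ and $hf^k\overline{x}_\xi=(\xi-2k)f^k\overline{x}_\xi$, so the proposed vectors lie in pairwise distinct $h$-eigenspaces of $\overline X(\xi,\tau)$ and hence are independent. That argument is shorter, but it tacitly assumes the images $e^l\overline{x}_\xi$ and $f^k\overline{x}_\xi$ are nonzero in the quotient --- eigenvectors for distinct eigenvalues are independent only when they are nonzero --- and the weight computation alone does not rule out $e^lx_\xi\in(c-\tau)X(\xi)$. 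Your leading-term analysis supplies exactly that: since the top-degree part of $(c-\tau)v$ for $v\neq 0$ is a nonzero combination of monomials $f^{k+1}e^{l+1}x_\xi$ with both exponents positive, while the ``pure'' monomials $e^ix_\xi$ and $f^jx_\xi$ have a zero exponent, no nontrivial combination of the latter can lie in $(c-\tau)X(\xi)$. So your argument is somewhat more laborious but also more self-contained; the paper's is slicker once one grants (or separately verifies, e.g.\ by your filtration argument) that the weight vectors survive in the quotient. Both approaches are valid.
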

\begin{proof}
Equation (\ref{eqn:fkel}) can be used to show that the set spans $\overline X(\xi, \tau)$.   To show this set is linearly independent, note that 
\begin{align*}
he^l \overline{x}_\xi &= e^l (h+2l) \overline{x}_\xi\\
&= (\xi+2l) e^l \overline{x}_\xi;\\
h f^k \overline{x}_\xi&= (\xi-2k) \overline{x}_\xi.
\end{align*}
Since each vector has a different weight with respect to the action of $h$, the set must be linearly independent.
\end{proof}

\medskip
In anticipation of Theorem \ref{thm:dense}, we now review the $\s_2$-modules ${\bf V}( \overline{\xi}, \tau )$ described in \cite{Maz-sl2}.  For $\xi \in \cc$, regard $\overline{\xi}=\xi + 2 \z$ as a coset for the additive subgroup $2 \z \subseteq \cc$.  We define ${\bf V}( \overline{\xi} , \tau)$ to be ${\rm span}_\cc \{ v_\eta \mid \eta \in \xi + 2 \z \}$ with $\mathfrak{sl}_2$-action given by 
$$f v_\eta = v_{\eta -2}, \quad h v_\eta = \eta v_\eta, \quad e v_\eta = \frac 14 ( \tau - ( \eta + 1)^2) v_{\eta + 2}$$
for all $\eta \in \xi + 2 \z$.  The irreducibility of ${\bf V}( \overline{\xi}, \tau )$ depends upon whether $\xi + 2 \z$ contains any root of the polynomial $g_\tau (x) = \tau - ( x+1)^2$.  

\begin{thm}[\cite{Maz-sl2}, Theorem 3.29]\label{thm:irredWeightMods}
Let $\xi, \tau \in \cc$. The $\mathfrak{sl}_2$-module ${\bf V}( \overline{\xi}, \tau )$ is irreducible if and only if $\tau - ( \xi + 2i+1)^2 \neq 0$ for all $i \in \z$.
\end{thm}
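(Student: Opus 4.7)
The plan is to exploit the fact that $h$ acts diagonalizably on ${\bf V}(\overline{\xi}, \tau)$ with one-dimensional weight spaces indexed by the coset $\xi + 2\z$. Since every submodule is stable under $h$ and decomposes according to its weights, every submodule is spanned by a subset of the basis $\{v_\eta \mid \eta \in \xi + 2\z\}$. This reduces the problem to tracking which basis vectors are reachable from which under the actions of $e$ and $f$.

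First I would record the ``transition rules'': $f v_\eta = v_{\eta - 2}$ is always nonzero, so once a basis vector $v_\eta$ lies in a submodule $M$, so do all of $v_{\eta - 2}, v_{\eta - 4}, \dots$; and $e v_\eta = \tfrac14 (\tau - (\eta + 1)^2) v_{\eta + 2}$ is nonzero exactly when $\tau \neq (\eta + 1)^2$. Writing $\eta = \xi + 2i$, the bad case is precisely $\tau = (\xi + 2i + 1)^2$ for some $i \in \z$.

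For the ($\Leftarrow$) direction, assume $\tau - (\xi + 2i + 1)^2 \neq 0$ for every $i \in \z$. Let $M$ be any nonzero submodule and pick a weight vector $v_\eta \in M$. Iterating $f$ produces every $v_{\eta - 2k}$ with $k \geq 0$. For the upward direction, each application of $e$ multiplies by a nonzero scalar in $\cc$, which is invertible; hence $v_{\eta + 2} = 4(\tau - (\eta+1)^2)^{-1} e v_\eta \in M$, and by induction $v_{\eta + 2k} \in M$ for all $k \geq 0$. Thus $M = {\bf V}(\overline{\xi}, \tau)$.

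For the ($\Rightarrow$) direction, suppose $\tau = (\xi + 2i_0 + 1)^2$ for some $i_0 \in \z$, and set $\eta_0 = \xi + 2i_0$, so that $e v_{\eta_0} = 0$. Let $N = \mathrm{span}_{\cc}\{v_{\eta_0 - 2k} \mid k \geq 0\}$. Then $N$ is clearly stable under $h$ and $f$, and stable under $e$ because $e v_{\eta_0} = 0$ while $e v_{\eta_0 - 2k} \in \cc v_{\eta_0 - 2k + 2} \subseteq N$ for $k \geq 1$. Since $N$ is a nonzero proper submodule, ${\bf V}(\overline{\xi}, \tau)$ is reducible. The argument is essentially elementary; the only subtle point is ensuring the upward step in the sufficient direction, which works because each coefficient of $e$ is a nonzero complex number and hence invertible.
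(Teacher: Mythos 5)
Your proof is correct. Note that the paper does not prove this statement at all: it is quoted as Theorem 3.29 of Mazorchuk's \emph{Lectures on $\mathfrak{sl}_2(\cc)$-modules} and used as a black box, so there is no in-paper argument to compare against; your argument is essentially the standard one from that source. The only step worth a word of justification is the claim that every submodule is spanned by the basis vectors it contains: this follows because $h$ acts with distinct eigenvalues on the $v_\eta$, so applying suitable polynomials in $h$ (Lagrange interpolation) to any element of a submodule isolates its weight components, and the weight spaces are one-dimensional. With that in place, your reachability analysis under $e$ and $f$ correctly yields both directions, including the observation that the unique obstruction to moving upward is $\tau=(\eta+1)^2$, i.e.\ $\tau=(\xi+2i+1)^2$ for some $i\in\z$.
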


A similar, but slightly relaxed, condition can be used to determine whether ${\bf V}( \overline \xi , \tau )$ is generated by the vector $v_{\xi'}$ for any $\xi' \in \overline \xi$.

\begin{lem}
Let $\xi, \tau \in \cc$, and $\xi' \in \overline \xi$.  Then ${\bf V}( \overline \xi , \tau )$ is generated by $v_{\xi'}$ if and only if $\tau - ( \xi' + 2i+1)^2 \neq 0$ for all $i \ge 0$. 
\end{lem}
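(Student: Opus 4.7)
The plan is to track which basis vectors $v_\eta$ lie in the $\mathfrak{sl}_2$-submodule $M_{\xi'}$ of $\mathbf{V}(\overline\xi,\tau)$ generated by $v_{\xi'}$, using the explicit formulas for the action. Since $f v_\eta = v_{\eta-2}$ has no scalar obstruction, repeated application of $f$ shows $v_{\xi' - 2j} \in M_{\xi'}$ for every $j \ge 0$ unconditionally. The entire question therefore reduces to whether the upward chain $v_{\xi'}, v_{\xi'+2}, v_{\xi'+4}, \dots$ is reached, and the only mechanism to move upward is $e$, acting as $e v_\eta = \tfrac14(\tau - (\eta+1)^2) v_{\eta+2}$.

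For the sufficiency direction, I would assume $\tau - (\xi' + 2i+1)^2 \neq 0$ for all $i \ge 0$ and prove by induction on $k \ge 0$ that $v_{\xi'+2k} \in M_{\xi'}$: the base case is trivial, and the inductive step follows because $e v_{\xi'+2k}$ is a nonzero scalar multiple of $v_{\xi'+2k+2}$. Combined with the downward chain above, this gives $M_{\xi'} = \mathbf{V}(\overline\xi,\tau)$.

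For the necessity direction, suppose some $i \ge 0$ satisfies $\tau - (\xi'+2i+1)^2 = 0$, and let $i_0$ be the smallest such index. I would exhibit the explicit proper submodule
\[
N = \mathrm{span}_\cc \{ v_\eta \mid \eta \in \overline\xi, \ \eta \le \xi' + 2 i_0 \}
\]
and check the three actions: $h$ and $f$ visibly preserve $N$; and $e$ preserves $N$ because $e v_\eta \in N$ for $\eta < \xi' + 2i_0$ automatically, while $e v_{\xi' + 2 i_0} = 0$ by the choice of $i_0$. Since $v_{\xi'} \in N$ but $v_{\xi' + 2i_0 + 2} \notin N$, this shows $v_{\xi'}$ does not generate $\mathbf{V}(\overline\xi,\tau)$.

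There is no real obstacle here — the argument is essentially bookkeeping with the explicit module structure. The only point worth flagging is that one must verify that the downward vectors $v_{\xi'-2j}$ give no backdoor route upward past the obstruction: applying $e$ to such a vector lands in weight $\xi' - 2j + 2 \le \xi'$, which is already accounted for, so the obstruction at $i_0$ is genuinely blocking.
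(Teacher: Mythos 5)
Your proof is correct and follows essentially the same route as the paper's: sufficiency via the nonvanishing of the $e$-action scalars along the upward chain followed by the unconditional downward $f$-chain, and necessity by exhibiting the proper submodule ${\rm span}_\cc\{v_\eta \mid \eta \le \xi' + 2i_0\}$ containing $v_{\xi'}$. Your version just spells out the closure checks in slightly more detail.
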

\begin{proof}
The assumption that $\tau - ( \xi' + 2i+1)^2 \neq 0$ for all $i \ge 0$ ensures that $U ( \mathfrak{sl}_2 ) v_{\xi'}$ contains ${\rm span}_\cc \{ v_{\xi' + 2j} \mid j \ge 0 \}$, and the formula for the action of $f$ then forces $U ( \mathfrak{sl}_2 ) v_{\xi'} = {\bf V}( \overline \xi , \tau )$.  Alternatively, if $\tau - ( \xi' + 2i_0+1)^2 = 0$ for some $i_0 \ge 0$, then the formulas for the action of $e$, $f$, and $h$ imply that ${\rm span}_\cc \{ v_{\xi' + 2j} \mid j \le i_0 \}$ is a proper submodule of ${\bf V}( \overline \xi , \tau )$ containing $v_{\xi'}$.  Thus, $v_{\xi'}$ does not generate ${\bf V}( \overline \xi , \tau )$ in this case.
\end{proof}

Recall from Lemma \ref{lem:Xmu-subQuotSame} that $\overline X ( \xi, \tau )^n \cong \overline X ( \xi , \tau )$ for all $n \in \z_{\ge 0}$.

\begin{prop}\label{prop:quotientXmu}
Let $\xi, \tau \in \cc$, and write $\overline{\xi}=\xi + 2\z$.  Then $\overline X(\xi, \tau) \cong {\bf V}(\overline{\xi}, \tau)$ if and only if $\tau-(\xi+2j+1)^2  \neq 0$ for every $j \in \z_{\ge 0}$.
\end{prop}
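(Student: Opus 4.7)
The plan is to produce a natural map $\varphi : \overline X(\xi,\tau) \to {\bf V}(\overline{\xi}, \tau)$ sending $\overline x_\xi \mapsto v_\xi$ and then use the weight-space structures to decide exactly when $\varphi$ is an isomorphism.

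First I would verify the map is well defined. By the universal property of the induced module $X(\xi) = U(\mathfrak{sl}_2) \otimes_{U(\cc h)} \cc_\xi$, there is a homomorphism $X(\xi) \to {\bf V}(\overline\xi, \tau)$ sending $x_\xi \mapsto v_\xi$, because $hv_\xi = \xi v_\xi$. To descend this to the quotient $\overline X(\xi, \tau) = X(\xi)/(c-\tau)X(\xi)$, I would compute directly from the given action on ${\bf V}(\overline\xi,\tau)$ that, using $c = 4fe + (h+1)^2$, one obtains $cv_\xi = (\tau - (\xi+1)^2)v_\xi + (\xi+1)^2 v_\xi = \tau v_\xi$. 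Hence $\varphi$ is well defined, $\mathfrak{sl}_2$-linear, and satisfies $\varphi(u \overline x_\xi) = u v_\xi$ for all $u \in U(\mathfrak{sl}_2)$.

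For the forward implication, assume $\tau - (\xi + 2j + 1)^2 \neq 0$ for every $j \ge 0$. The lemma immediately preceding the proposition then guarantees that $v_\xi$ generates ${\bf V}(\overline\xi, \tau)$, which says precisely that $\varphi$ is surjective. By Lemma \ref{lem:spanHmu}, the $h$-weight support of $\overline X(\xi, \tau)$ is exactly $\xi + 2\z$ with each weight space one-dimensional, and by construction ${\bf V}(\overline\xi, \tau)$ has an identical weight decomposition. An $h$-equivariant surjection between modules whose weight spaces agree and are one-dimensional at every weight is automatically injective, so $\varphi$ is an isomorphism.

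For the converse, I would argue purely from the cyclic structure. Suppose $\psi : \overline X(\xi, \tau) \to {\bf V}(\overline\xi, \tau)$ is any isomorphism. Since $\overline X(\xi, \tau)$ is generated as an $\mathfrak{sl}_2$-module by the weight-$\xi$ vector $\overline x_\xi$, the image $\psi(\overline x_\xi)$ generates ${\bf V}(\overline\xi, \tau)$. But the $\xi$-weight space of ${\bf V}(\overline\xi, \tau)$ is one-dimensional and spanned by $v_\xi$, so $\psi(\overline x_\xi) \in \cc^{\times} v_\xi$, whence $v_\xi$ itself generates ${\bf V}(\overline\xi, \tau)$. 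Applying the preceding lemma then forces $\tau - (\xi + 2j + 1)^2 \neq 0$ for all $j \ge 0$, completing the equivalence.

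I do not expect any substantive obstacle; the crux is simply recognizing that both modules share the same one-dimensional weight decomposition on $\xi + 2\z$, so surjectivity from the cyclic generator upgrades to a bona fide isomorphism, and conversely any isomorphism is pinned down up to scalar by its effect on the cyclic generator.
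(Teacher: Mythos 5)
Your proof is correct, and its skeleton matches the paper's: both build the map $\overline x_\xi \mapsto v_\xi$ and settle the question by looking at $h$-weight spaces. The differences are in execution, and they are worth noting. In the forward direction the paper writes the isomorphism explicitly on the basis of Lemma \ref{lem:spanHmu}, including the normalizing scalars $\bigl(\prod_{j=0}^{l-1}\tfrac14(\tau-(\xi+2j+1)^2)\bigr)^{-1}$ on $e^l\overline x_\xi$, and then checks equivariance by hand; you instead get equivariance for free from the universal property of $X(\xi)$ (after the computation $cv_\xi=\tau v_\xi$, which is the one point you must verify to descend to $\overline X(\xi,\tau)$), obtain surjectivity from the generation lemma stated just before the proposition, and upgrade to injectivity by the general principle that an $h$-equivariant surjection between weight modules with matching one-dimensional weight spaces has trivial (graded) kernel. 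For the converse, the paper argues directly that any homomorphism $\theta$ satisfies $\theta(e^{j+1}\bar x_\xi)=e(c_jv_{\xi+2j})=0$ while $e^{j+1}\bar x_\xi\neq 0$, whereas you deduce from an assumed isomorphism that $v_\xi$ must generate ${\bf V}(\overline\xi,\tau)$ and then invoke the generation lemma; this makes tidier use of a result the paper proves but does not actually cite in its own argument. Both routes are sound; yours trades the explicit formula for slightly more structural bookkeeping.
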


\begin{proof}
First suppose $ \tau-(\xi+2j+1)^2  \neq 0$ for all $j \in \z_{\ge 0}$. Define a map $\overline X(\xi, \tau) \rightarrow {\bf V}(\overline{\xi}, \tau)$ by 
\begin{align*}
\overline{x}_\xi & \mapsto v_\xi;\\
f^k \overline{x}_\xi &\mapsto v_{\xi-2k}, \quad k \in \z_{>0}; \\
e^l \overline{x}_\xi & \mapsto \left( \prod_{j=0}^{l-1} \frac14 (\tau-(\xi+2j+1)^2) \right) v_{\xi+2l}.
\end{align*}
It follows from Lemma \ref{lem:spanHmu} that this map is a vector-space isomorphism.  It is straightforward to check that the map preserves the $\s_2$-action, so $\overline X(\xi, \tau) \cong {\bf V}(\overline{\xi}, \tau)$.

Conversely, suppose $ \tau-(\xi+2j+1)^2  = 0$ for some $j \in \z_{\ge 0}$. Any homomorphism $\theta: \overline X(\xi, \tau) \rightarrow {\bf V}(\overline{\xi}, \tau)$ must preserve weight spaces, so that $\theta(e^j \bar{x}_\xi) = c_j v_{\xi+2 j}$ for some $c_j \in \cc$.   Then $\theta (e^{j+1} \bar{x}_\xi) = e \theta(e^j \bar{x}_\xi) = e ( c_j v_{\xi+2j})=0$.  Therefore, $\theta$ is not an isomorphism.
\end{proof}

Note that for each $\xi$ and $\tau$, the equation $\tau-( \xi +2j+1)^2  = 0$ is satisfied by at most two integers $j$.  If $\tau-(\xi +2j+1)^2 \neq 0$ for all $j \in \z_{\ge 0}$, then ${\bf V}(\overline{\xi}, \tau) \cong \overline X(\xi , \tau)$ by Proposition \ref{prop:quotientXmu}.  Alternatively, if $\tau-(\xi +2j+1)^2 = 0$ for some $j \in \z_{\ge 0}$, choose $j_0 \in \z_{\ge 0}$ maximal with $\tau-(\xi +2j_0 +1)^2 = 0$.   Take $m \in \z$ with $m > j_0$, and let $\xi' = \xi + 2m$.   Then ${\bf V}(\overline{\xi'}, \tau) \cong {\bf V}(\overline{\xi}, \tau)$ since $\overline{\xi'} = \overline{\xi}$, and $\tau - ( \xi' + 2j + 1)^2 = \tau - ( \xi + 2 (m + j) + 1)^2 \neq 0$ for all $j \ge 0$ since $m + j > j_0 + j$.  Thus $\overline X(\xi', \tau) \cong {\bf V}(\overline{\xi'}, \tau) \cong {\bf V}(\overline{\xi}, \tau)$.  Thus we see that every module ${\bf V}(\overline{\xi}, \tau)$ has the form $\overline X(\xi', \tau)$ for some $\xi' \in \cc$.

\bigskip 
In the following lemma, we write $M(\delta)$ to represent the highest weight Verma module of highest weight $\delta \in \cc$; and $V(\delta)$ to represent the lowest weight Verma module of lowest weight $\delta$.  To understand the structure of $\overline X(\xi, \tau)$, Proposition \ref{prop:quotientXmu} implies that it remains only to consider the case when there exists $j \in \z_{\ge 0}$ such that $\tau-(\xi+2j+1)^2 = 0$.

\begin{lem} \label{lem:Xcompseries}
Let $\xi, \tau \in \cc$, and suppose there is $j_0 \in \z_{\ge 0}$ minimal with $\tau = ( \xi + 2j_0 +1)^2$.  

Then 
$Y(\xi, \tau) ={\rm span}_\cc \{ e^{j_0 + i} \overline x_\xi  \mid i \in \z, i > 0 \}$ is an $\mathfrak{sl}_2$-submodule of $\overline X(\xi, \tau)$ such that 
$$\overline X(\xi, \tau) /Y(\xi, \tau) \cong M( \xi + 2 j_0 ); \quad  Y(\xi, \tau) \cong V(\xi + 2 j_0 + 2).$$
\end{lem}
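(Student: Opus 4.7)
The plan is to reduce everything to bookkeeping with the basis $\{e^l \overline{x}_\xi : l \geq 0\} \cup \{f^j \overline{x}_\xi : j > 0\}$ of $\overline X(\xi, \tau)$ supplied by Lemma \ref{lem:spanHmu}. Because $c$ acts as $\tau$ on $\overline X(\xi, \tau)$ and $c = 4fe + (h+1)^2$, the fundamental recursion
$$f \cdot e^l \overline{x}_\xi = \tfrac{1}{4}\bigl(\tau - (\xi + 2l - 1)^2\bigr)\, e^{l-1} \overline{x}_\xi$$
holds for every $l \geq 1$. The minimality of $j_0$ forces this scalar to vanish when $l = j_0 + 1$ and to be nonzero for $1 \leq l \leq j_0$.

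The recursion immediately shows that $Y(\xi, \tau)$ is an $\mathfrak{sl}_2$-submodule: $h$ preserves each weight line $\cc\, e^{j_0 + i} \overline{x}_\xi$, $e$ shifts $e^{j_0 + i} \overline{x}_\xi$ to $e^{j_0 + i + 1} \overline{x}_\xi \in Y(\xi, \tau)$, and $f$ annihilates the bottom element $e^{j_0 + 1} \overline{x}_\xi$ while sending every $e^{j_0 + i} \overline{x}_\xi$ with $i \geq 2$ to a scalar multiple of $e^{j_0 + i - 1} \overline{x}_\xi \in Y(\xi, \tau)$. Since $e^{j_0 + 1} \overline{x}_\xi$ has $h$-weight $\xi + 2j_0 + 2$ and is killed by $f$, the universal property of $V(\xi + 2j_0 + 2)$ yields a surjective $\mathfrak{sl}_2$-homomorphism $V(\xi + 2j_0 + 2) \to Y(\xi, \tau)$ carrying the PBW basis $\{e^i v_{\xi + 2j_0 + 2} : i \geq 0\}$ bijectively to the spanning set $\{e^{j_0 + 1 + i} \overline{x}_\xi : i \geq 0\}$ of $Y(\xi, \tau)$, and so it is an isomorphism.

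For the quotient, the image of $e^{j_0} \overline{x}_\xi$ has $h$-weight $\xi + 2j_0$ and is annihilated by $e$ because $e^{j_0 + 1} \overline{x}_\xi \in Y(\xi, \tau)$. The universal property of $M(\xi + 2j_0)$ then produces an $\mathfrak{sl}_2$-homomorphism $M(\xi + 2j_0) \to \overline X(\xi, \tau)/Y(\xi, \tau)$. To verify that it is an isomorphism, I iterate the fundamental recursion: for $0 \leq k \leq j_0$ the vector $f^k \cdot e^{j_0} \overline{x}_\xi$ is a nonzero scalar multiple of $e^{j_0 - k} \overline{x}_\xi$ (with nonvanishing coming from the minimality of $j_0$), and for $k > j_0$ the action continues into the $f^j$-tower and yields nonzero scalar multiples of $f^{k - j_0} \overline{x}_\xi$. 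These images comprise, up to nonzero scaling, the entire basis of the quotient, so the map is surjective; combined with the observation that both $M(\xi + 2j_0)$ and $\overline X(\xi, \tau)/Y(\xi, \tau)$ have one-dimensional weight spaces at the weights $\xi + 2j_0 - 2k$ for $k \geq 0$, this forces bijectivity.

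The main obstacle is this last bookkeeping step: one must track how repeated $f$-action starting from $\overline{e^{j_0}\overline{x}_\xi}$ first descends through the $e^l$-tower and then transitions smoothly into the $f^j$-tower without any intermediate vector vanishing, and this is precisely where the minimality hypothesis on $j_0$ becomes essential.
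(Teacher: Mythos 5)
Your proposal is correct and follows essentially the same route as the paper: both use the identity $fe = \tfrac14(c-(h+1)^2)$ together with the fact that $c$ acts by $\tau$ on $\overline X(\xi,\tau)$ to see that $f$ kills $e^{j_0+1}\overline x_\xi$ and otherwise moves within the basis of Lemma \ref{lem:spanHmu}, then invoke the universal properties of $V(\xi+2j_0+2)$ and $M(\xi+2j_0)$ and a weight-space count to get the two isomorphisms. Your write-up is in fact slightly more explicit than the paper's about where the minimality of $j_0$ enters (the nonvanishing of the scalars $\tau-(\xi+2l-1)^2$ for $1\le l\le j_0$, which guarantees that $\overline{e^{j_0}\overline x_\xi}$ generates the quotient), a point the paper passes over quickly.
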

\begin{proof}
It is clear that $Y(\xi, \tau)$ is invariant under the action of $e$ and $h$.  We now consider the action of $f$ on this space.  Note that 
\begin{align*}
f e^{j_0 +1} \overline x_\xi&=\frac 14 (fe) e^{j_0} \overline x_\xi = \frac 14 \left( c - (h+1)^2 \right) e^{j_0} \overline x_\xi \\
&= \frac 14 \left( c - (\xi + 2j_0 +1)^2 \right) e^{j_0} \overline x_\xi = \frac 14 ( c - \tau ) e^{j_0} \overline x_\xi = 0.
\end{align*}
Now consider  $f e^{j_0 +i} \overline x_\xi$ for $i > 1$:
$$f e^{j_0 +i} \overline x_\xi= fe e^{j_0 + i-1} \overline x_\xi= \frac 14 (c - (h+1)^2) e^{j_0 + i-1} \overline x_\xi \in \cc e^{j_0 + i-1} \overline x_\xi$$
since both $c$ and $h$ act by a scalar on $e^{j_0 + i-1} \overline x_\xi$. Therefore, $Y(\xi, \tau)$ is invariant under the action of $e$ and thus a submodule.

To prove the isomorphism, let $\overline x$ represent the image of $x \in \overline X( \xi, \tau)$ in $\overline X(\xi, \tau) / Y(\xi, \tau)$. Note that $\overline{e^{j_0} \overline x_\xi}$ is a highest weight vector in $\overline X(\xi, \tau)/ Y(\xi, \tau)$ that generates the module. Thus there is a surjective map from $M( \xi + 2 j_0 )$ to $\overline X(\xi, \tau)/ Y(\xi, \tau)$. Since Lemma \ref{lem:spanHmu} implies that $\{ \overline{e^{l} \overline x_\xi}, \overline{f^{k} \overline x_\xi}\mid 0 \le l \leq j_0, k >0 \}$ is a basis for $\overline X(\xi, \tau) / Y(\xi, \tau)$, this map must be an isomorphism.

To complete the proof, we see that $Y ( \xi , \tau )$ is generated by the vector $e^{j_0 + 1} \overline x_\xi$.  Moreover, $f ( e^{j_0 + 1} \overline x_\xi ) = 0$, and $h ( e^{j_0 + 1} \overline x_\xi ) = ( \xi + 2 j_0 + 2 ) e^{j_0 + 1} \overline x_\xi$.  That is, $Y(\xi, \tau)$ is a lowest weight Verma module of lowest weight $\xi + 2 j_0 + 2$
\end{proof}

Note that the Verma modules $M( \xi + 2 j_0 )$ and $V(\xi + 2 j_0 + 2)$ are either simple or have a unique simple quotient. Thus, Lemma \ref{lem:Xcompseries} effectively gives a composition series for $\overline X(\xi, \tau)$ if $\tau-(\xi+2j_0 +1)^2  = 0$. As proved in \cite{Maz-sl2}, this is the same composition series as for ${\bf V} (\overline \xi, \tau)$ even though Proposition \ref{prop:quotientXmu} shows that $\overline X(\xi, \tau)^n \not\cong {\bf V}(\overline{\xi}, \tau)$ when $\tau - ( \xi + 2j_0 + 1)^2 = 0$ for some $j_0 \ge 0$.

We summarize the results of this section in the following theorem.
\begin{thm} \label{thm:dense}
For every $\tau \in \cc$, the $\s_2$-module $X ( \xi )$ has a filtration 
$$X(\xi) \supseteq (c-\tau) X(\xi) \supseteq (c-\tau)^2 X(\xi) \supseteq \cdots$$
with corresponding subquotients isomorphic to $\overline X ( \xi , \tau ) = X ( \xi ) / ( c - \tau ) X ( \xi )$.  
\begin{itemize}
\item[(i)] If $\tau - ( \xi + 2 j + 1 )^2 \neq 0$ for all $j \in \z_{\ge 0}$, then $\overline X(\xi, \tau) \cong {\bf V}( \overline{\xi}, \tau )$, where ${\bf V}( \overline{\xi}, \tau )$ is the module described in \cite{Maz-sl2}.  

\item[(ii)] If $\tau - ( \xi + 2 j_0 + 1 )^2 = 0$ for some minimal $j_0 \in \z_{\ge 0}$, then there is a proper submodule $Y(\xi, \tau) \subseteq \overline X ( \xi , \tau )$ such that 
 $$\overline X(\xi, \tau) /Y(\xi, \tau) \cong M( \xi + 2 j_0 ); \quad Y(\xi, \tau) \cong V(\xi + 2 j_0 + 2).$$ 
 \end{itemize}
\end{thm}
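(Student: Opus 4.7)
The plan is to assemble the earlier results of Section \ref{sec:Xxi}, since Theorem \ref{thm:dense} is essentially a summary statement and the real work has already been done in the preceding lemmas and propositions. There is no single ``main obstacle'' to overcome — the challenge is simply to organize the cited results cleanly and make sure each claim has been addressed.

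First I would establish the filtration. The strictly descending chain
\[
X(\xi) \supsetneq (c-\tau) X(\xi) \supsetneq (c-\tau)^2 X(\xi) \supsetneq \cdots
\]
is exactly the content of Lemma \ref{lem:Xfiltration}(ii), and part (i) of that lemma (non-vanishing of $(c-\tau)v$ for $v \neq 0$) is precisely what is used to derive (ii). The assertion that each successive subquotient is isomorphic to $\overline X(\xi,\tau)$ is then immediate from Lemma \ref{lem:Xmu-subQuotSame}, which exhibited the explicit multiplication-by-$(c-\tau)$ map and showed it is an isomorphism (surjectivity being clear and injectivity following from Lemma \ref{lem:Xfiltration}(i)).

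For part (i), I would simply invoke Proposition \ref{prop:quotientXmu}: under the hypothesis that $\tau - (\xi + 2j+1)^2 \neq 0$ for every $j \in \z_{\ge 0}$, that proposition already constructs the explicit vector-space isomorphism $\overline X(\xi,\tau) \to {\bf V}(\overline{\xi}, \tau)$ (via $e^l \overline x_\xi \mapsto \left(\prod_{j=0}^{l-1}\tfrac{1}{4}(\tau-(\xi+2j+1)^2)\right)^{-1} v_{\xi+2l}$ and $f^k \overline x_\xi \mapsto v_{\xi-2k}$) and verifies it respects the $\s_2$-action.

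For part (ii), under the hypothesis that a minimal $j_0 \in \z_{\ge 0}$ with $\tau = (\xi+2j_0+1)^2$ exists, I would apply Lemma \ref{lem:Xcompseries} verbatim. That lemma defines $Y(\xi,\tau) = \mbox{span}_\cc\{e^{j_0+i}\overline x_\xi \mid i \in \z_{>0}\}$, checks it is a submodule (using that $fe^{j_0+1}\overline x_\xi = \tfrac{1}{4}(c-\tau)e^{j_0}\overline x_\xi = 0$ on the nose), identifies $\overline X(\xi,\tau)/Y(\xi,\tau)$ as a highest weight module generated by the image of $e^{j_0}\overline x_\xi$ with basis provided by Lemma \ref{lem:spanHmu}, and identifies $Y(\xi,\tau)$ as a lowest weight Verma module generated by $e^{j_0+1}\overline x_\xi$ of weight $\xi+2j_0+2$. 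Thus the entire proof reduces to citing Lemma \ref{lem:Xfiltration}, Lemma \ref{lem:Xmu-subQuotSame}, Proposition \ref{prop:quotientXmu}, and Lemma \ref{lem:Xcompseries} in order.
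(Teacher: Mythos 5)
Your proposal is correct and follows exactly the paper's route: the theorem is stated there as a summary of the section, with the filtration and subquotient claims supplied by Lemma \ref{lem:Xfiltration} and Lemma \ref{lem:Xmu-subQuotSame}, part (i) by Proposition \ref{prop:quotientXmu}, and part (ii) by Lemma \ref{lem:Xcompseries}. Nothing further is needed.
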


\subsubsection{Modules induced from one-dimensional subalgebras of $\s_2$}

First we state a general lemma that allows us to characterize modules as ``twists" of other modules. Then we describe all one-dimensional subalgebras of $\s_2$ and use this to relate all modules induced from one-dimensional modules of these subalgebras as twists of known induced modules.

Suppose $\g, \g'$ are Lie algebras and $\gamma: \mathfrak g \rightarrow \mathfrak g'$ is an isomorphism. For a $\mathfrak g'$-module $M$, define  the \textit{$\gamma$-twist} of $M$, denoted $M^\gamma$,  to be  the $\g$-module with underlying space $M$ and $\g$-action defined via the map $\mathfrak g \to \mathfrak g' \to {\rm End} (M)$.  In particular, if $M$ is a $\gamma ( \mathfrak b )$-module for some subalgebra $\mathfrak b \subseteq \g$ and some automorphism $\gamma : \g \to \g$, then $M^\gamma$ is a $\mathfrak b$-module.

\begin{lem}\label{lem:inducedtwist}
Let $\g$ be a Lie algebra with subalgebra $\mathfrak b$, and let $\gamma : \g \to \g$ be a Lie algebra automorphism.  Suppose $M$ is a $\gamma (\mathfrak b)$-module.  Then 
$$\Ind_{\gamma(\mathfrak b)}^\g (M) \cong \left(  \Ind_{ \mathfrak b }^\g \left( M^{\gamma} \right) \right)^{\gamma^{-1}}.$$
\end{lem}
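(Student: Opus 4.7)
The plan is to define an explicit candidate for the isomorphism, check well-definedness and $\g$-equivariance, and exhibit an explicit two-sided inverse. Since $\gamma$ is a Lie algebra automorphism of $\g$, it extends uniquely to an algebra automorphism of $U(\g)$, which I will also denote by $\gamma$. Using this extension, I would define
$$\Phi : U ( \g ) \otimes_{U ( \gamma ( \mathfrak b ))} M \longrightarrow U ( \g ) \otimes_{U ( \mathfrak b )} M^\gamma, \qquad u \otimes m \longmapsto \gamma^{-1} ( u ) \otimes m,$$
together with the analogous map $\Psi ( u \otimes m ) = \gamma ( u ) \otimes m$ in the reverse direction.

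To check that $\Phi$ is well-defined on the balanced tensor product, I would verify that for every $b \in \mathfrak b$, the two images $\Phi ( u \gamma(b) \otimes m )$ and $\Phi ( u \otimes \gamma(b) m )$ agree in the codomain. This reduces to the identity $\gamma^{-1}(u)\, b \otimes m = \gamma^{-1}(u) \otimes \gamma(b) m$, which holds by the $U(\mathfrak b)$-balancing on the right: by the definition of $M^\gamma$, the action of $b \in \mathfrak b$ on $M^\gamma$ is precisely the action of $\gamma(b)$ on $M$.

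For $\g$-equivariance, I recall that the outer $\gamma^{-1}$-twist on the codomain means that $x \in \g$ acts on $u \otimes m$ by $\gamma^{-1}(x) u \otimes m$, while on the domain $x$ acts in the usual way by $xu \otimes m$. Then $\Phi ( xu \otimes m ) = \gamma^{-1}(xu) \otimes m = \gamma^{-1}(x) \gamma^{-1}(u) \otimes m$, which agrees with $x \cdot \Phi ( u \otimes m )$. The same calculations with $\gamma$ in place of $\gamma^{-1}$ show that $\Psi$ is well-defined and $\g$-equivariant, and $\Phi$ and $\Psi$ are visibly mutually inverse on pure tensors, hence on the whole modules.

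The main obstacle is really just bookkeeping: keeping track of which side of each tensor product is balanced over which subalgebra and which module has been twisted by which map. The domain is balanced over $U(\gamma(\mathfrak b))$ with the untwisted module $M$, while the codomain is balanced over $U(\mathfrak b)$ with the twisted module $M^\gamma$ and then wrapped in an outer $\gamma^{-1}$-twist. Once these conventions are made explicit, every verification is an immediate consequence of the definition of the twist construction and the universal property of $U(\g)$.
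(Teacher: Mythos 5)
Your proof is correct, and it reaches the conclusion by a different route than the paper. The paper never writes down the isomorphism on general tensors: it starts from the $\gamma(\mathfrak b)$-map $m \mapsto 1 \otimes m$ into the restriction of $\bigl( \Ind_{\mathfrak b}^{\g} ( M^{\gamma} ) \bigr)^{\gamma^{-1}}$, invokes Frobenius reciprocity to lift it to a $\g$-homomorphism $\theta$, observes that $\theta$ is surjective because the vectors $1 \otimes m$ generate the target, and then builds the map $\theta'$ in the opposite direction the same way, concluding that $\theta' \circ \theta$ and $\theta \circ \theta'$ are identities because they fix a generating set. Your argument instead constructs $\Phi ( u \otimes m ) = \gamma^{-1}(u) \otimes m$ explicitly, checks balancedness over $U(\gamma(\mathfrak b))$ and $\g$-equivariance by hand, and exhibits $\Psi$ as a two-sided inverse; unwinding the paper's $\theta$ on a pure tensor shows it is exactly your $\Phi$, so the two proofs produce the same isomorphism. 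What the paper's route buys is that the universal property handles well-definedness invisibly; what your route buys is a completely explicit formula and no reliance on Frobenius reciprocity, at the cost of the bookkeeping you describe. The only point worth making fully explicit in your write-up is that the balancing check over all of $U(\gamma(\mathfrak b))$, not just over elements $\gamma(b)$ with $b \in \mathfrak b$, follows because $\gamma^{-1}$ restricts to an algebra isomorphism $U(\gamma(\mathfrak b)) \to U(\mathfrak b)$ intertwining the two module structures on $M$ and $M^{\gamma}$; this is immediate but is the step where the extension of $\gamma$ to $U(\g)$ is genuinely used.
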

\begin{proof}
Since $\B$ acts on $\Ind_{ \mathfrak b }^\g \left( M^{\gamma} \right)$, it follows that $\gamma ( \B )$ acts on $\left(  \Ind_{ \mathfrak b }^\g \left( M^{\gamma} \right) \right)^{\gamma^{-1}}$.  In particular, if $y \in \B$, then $\gamma (y)$ acts on $1 \otimes m \in \left(  \Ind_{ \mathfrak b }^\g \left( M^{\gamma} \right) \right)^{\gamma^{-1}}$ according to $\gamma (y) . ( 1 \otimes m ) = \gamma^{-1}(\gamma(y)) \otimes m=y \otimes m=1 \otimes y . m$.  Since we are regarding $m \in M^\gamma$, it follows that $y.m = \gamma (y) m$, and therefore 
$$\gamma (y) . ( 1 \otimes m ) = 1 \otimes \gamma (y) m.$$
Thus, there is a $\gamma(\B)$-homomorphism $M \rightarrow \Res_{\gamma(\B)}^{\g} \left(\Ind_{\B}^{\g} (M^{\gamma})\right)^{\gamma^{-1}}$ given by $m \mapsto 1 \otimes m$.  Frobenius reciprocity implies that 
$${\rm Hom}_{\gamma ( \B )} ( M , {\rm Res}_{\gamma ( \B )}^{\g} (N) ) \cong {\rm Hom}_{\g} ( {\rm Ind}_{\gamma ( \B )}^{\g} (M) , N ),$$
where $N = \left(  \Ind_{ \mathfrak b }^\g \left( M^{\gamma} \right) \right)^{\gamma^{-1}}$.  Hence the map $m \mapsto 1 \otimes m$ in ${\rm Hom}_{\gamma ( \B )} ( M , {\rm Res}_{\gamma ( \B )}^{\g} (N) )$ lifts to a map $\theta \in {\rm Hom}_{\g} ( {\rm Ind}_{\gamma ( \B )}^{\g} (M) , N )$ satisfying $\theta ( 1 \otimes m) = 1 \otimes m$.  Moreover, since the elements $1 \otimes m$, for $m \in M$, generate $\left( \Ind_{\B}^{\g} (M^{\gamma}) \right)^{\gamma^{-1}}$, this homomorphism is surjective.

Using similar reasoning, we can construct a surjective homomorphism $\Ind_{\B}^{\g} (M^{\gamma}) \rightarrow \left( \Ind_{\gamma(\B)}^{\g} (M) \right)^\gamma$, which gives a surjective homomorphism $\theta': \Ind_{\B}^{\g} (M^{\gamma})^{\gamma^{-1}} \rightarrow \left( \Ind_{\gamma(\B)}^{\g} (M) \right)$ such that (again) $\theta' (1 \otimes m) = 1 \otimes m$.  Since $\theta' (\theta (1 \otimes m )) = 1 \otimes m$ and the elements $1 \otimes m$ generate $\Ind_{\gamma(\B)}^{\g} (M)$, this implies that $\theta' \circ \theta = {\rm Id}$. Similarly, we can argue $\theta \circ \theta'= {\rm Id}$. It follows that both $\theta$ and $\theta'$ are isomorphisms.
\end{proof}

Next we describe one-dimensional subalgebras of $\s_2$ using the automorphisms defined in (\ref{eqn:gammalambda})-(\ref{eqn:sigma}).  It is well-known that the Cartan subalgebras of a finite-dimensional semisimple Lie algebra $\g$ are conjugate to each other via $\g$-automorphisms. (See \cite{H}.)  Thus, the subalgebras described in Lemma \ref{lem:onedim} (iii), (iv) are simply Cartan subalgebras of $\s_2$.  (In fact, they are all of the Cartan subalgebras.) The important content of this lemma -- for us -- is to describe the particular automorphisms that relate the various one-dimensional subalgebras of $\s_2$.

\begin{lem} \label{lem:onedim}
The one-dimensional subalgebras of $\s_2$ are precisely
\begin{itemize}
\item[(i)] $\mathfrak n_\lambda:=\gamma_\lambda(\cc e)$, $\lambda \in \cc$; 
 \item[(ii)] $\mathfrak n^-=\cc f=\sigma(\cc e)$;
\item[(iii)] $\mathfrak h_\lambda:=\gamma_\lambda(\cc h)$, $\lambda \in \cc$;  
\item[(iv)] $\mathfrak h_{\lambda_1, \lambda_2}:=\gamma_{\lambda_1, \lambda_2} (\cc h)$, $\lambda_1, \lambda_2 \in \cc$ with $\lambda_1 \neq \lambda_2$. 
\end{itemize}
\end{lem}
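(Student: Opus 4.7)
The plan is to first observe that any one-dimensional subspace of $\mathfrak{sl}_2$ is automatically a subalgebra (since $[X,X]=0$), so the task reduces to classifying nonzero $X = \alpha e + \beta h + \gamma f$ up to scalar multiple. I would introduce the invariant $\Delta := \beta^2 + \alpha\gamma$ --- the negative of the determinant of $X$ viewed as a traceless $2\times 2$ matrix --- and split the analysis into a nilpotent case ($\Delta = 0$) and a semisimple case ($\Delta \neq 0$). Each case then further splits on whether $\alpha = 0$.

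In the nilpotent case, I would handle $\alpha = 0$ first: then $\beta^2 = 0$ forces $\beta = 0$, and $X$ is a nonzero multiple of $f$, giving $\cc X = \cc f = \mathfrak n^-$. When $\alpha \neq 0$, I rescale so $\alpha = 1$; then $\gamma = -\beta^2$, and setting $\lambda = -\beta$ the formula (\ref{eqn:gammalambda}) yields $X = e - \lambda h - \lambda^2 f = \gamma_\lambda(e)$, so $\cc X = \mathfrak n_\lambda$.

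In the semisimple case, if $\alpha = 0$ then $\beta \neq 0$; rescaling so $\beta = 1$ and taking $\lambda = \gamma/2$ puts $X$ in the form $h + 2\lambda f = \gamma_\lambda(h)$, so $\cc X = \mathfrak h_\lambda$. When $\alpha \neq 0$, I would rescale so $\alpha = -2$; matching against the numerator in (\ref{eqn:tau}) requires finding $\lambda_1 \neq \lambda_2$ with $\lambda_1 + \lambda_2 = \beta$ and $2\lambda_1\lambda_2 = \gamma$, which amounts to solving $t^2 - \beta t + \gamma/2 = 0$. The key check will be that its discriminant $\beta^2 - 2\gamma$ equals $\Delta$ (after the rescaling $\alpha = -2$), which is nonzero by hypothesis, so the two roots are distinct and $\cc X = \mathfrak h_{\lambda_1, \lambda_2}$.

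The only delicate point --- really bookkeeping rather than a conceptual obstacle --- is the discriminant calculation guaranteeing $\lambda_1 \neq \lambda_2$ in the last subcase; the rest is a direct match of coefficients against the defining formulas (\ref{eqn:gammalambda}) and (\ref{eqn:tau}). Since the statement only asks that every one-dimensional subalgebra appears in the list (i)--(iv), no uniqueness of the parameterization needs to be established.
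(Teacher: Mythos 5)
Your proposal is correct and follows essentially the same route as the paper: a case analysis on the coefficients of a spanning vector, matching them against the formulas \eqref{eqn:gammalambda}--\eqref{eqn:sigma}, with the distinctness of $\lambda_1,\lambda_2$ in the last case guaranteed by the nonvanishing of the discriminant $\beta^2+\alpha\gamma$ (the paper writes the roots explicitly as $\beta\pm\sqrt{\beta^2-\delta}$, which is the same computation). Organizing the cases by the determinant invariant first rather than by the $e$-coefficient first is a cosmetic difference only.
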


\begin{proof}
Let $\mathfrak a$ be a one-dimensonal subalgebra of $\s_2$. First suppose that elements of $\mathfrak a$ have a nonzero coefficient for $e$, so that $\mathfrak a$ has a basis of the form $e-\beta h - \delta f$. If $\delta= \beta^2$, then $\mathfrak a= \gamma_\beta (\cc e)$ since $\gamma_\beta (e) = e - \beta h - \beta^2 f$.  
If $\beta^2 \neq \delta$, then 
$\mathfrak a= \gamma_{\lambda_1, \lambda_2}(\cc h)$, where $\lambda_1=\beta+ \sqrt{\beta^2-\delta}, \lambda_2=\beta- \sqrt{\beta^2-\delta}$.

If the coefficient of $e$ is zero, then $\mathfrak a$ has a basis $h - \delta f$ and $\mathfrak a= \gamma_{-\frac{\delta}{2}}(\cc h)$; or $\mathfrak a$ has a basis $f$.
\end{proof}

Note that for any one-dimensional Lie algebra $\mathfrak a$, its simple modules have the form $\cc_\mu$, where $\mu: \mathfrak a \rightarrow \cc$ is a Lie algebra homomorphism and $\cc_\mu$ is defined by $x1=\mu(x)$.
The following proposition constructs any  $\s_2$-module induced from a simple module for a one-dimensional subalgebra of $\s_2$ as a twist of some $X(\nu)$ or $W(\nu)$.

\begin{prop} \label{prop:twist}
Let $\mathfrak a$ be a one-dimensional $\s_2$-subalgebra and $\mu: \mathfrak a \rightarrow \cc$ a Lie algebra homomorphism.  
\begin{itemize}
\item[(i)] If $\mathfrak a=\mathfrak h_{\lambda}$ or $\mathfrak h_{\lambda_1, \lambda_2}$, then 
$$\Ind_{\mathfrak a}^{\s_2} (\cc_\mu) \cong X(\mu(\gamma(h)))^{\gamma^{-1}}$$ 
where $\gamma=\gamma_\lambda$ or $\gamma_{\lambda_1, \lambda_2}$ respectively.  
\item[(ii)] If $\mathfrak a=\mathfrak n_{\lambda}$ or $\mathfrak n^-$, then 
$$
\Ind_{\mathfrak a}^{\s_2} (\cc_\mu) \cong W(\mu(\gamma(e)))^{\gamma^{-1}}
$$ where $\gamma=\gamma_\lambda$ or $\sigma$ respectively. 
\end{itemize}
\end{prop}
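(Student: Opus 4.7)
The plan is to reduce both parts to a direct application of Lemma \ref{lem:inducedtwist}, after using Lemma \ref{lem:onedim} to present each one-dimensional subalgebra $\mathfrak a$ as $\gamma(\mathfrak b)$ for $\mathfrak b = \cc h$ or $\mathfrak b = \cc e$ and an explicit $\s_2$-automorphism $\gamma$. The only real work is identifying the $\gamma$-twist of the one-dimensional $\mathfrak a$-module $\cc_\mu$ as one of the standard one-dimensional modules $\cc_\xi$ (for $\cc h$) or $\cc_\eta$ (for $\cc e$) used to build $X(\xi)$ and $W(\eta)$.

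For part (i), I would write $\mathfrak a = \gamma(\cc h)$, where $\gamma$ is either $\gamma_\lambda$ or $\gamma_{\lambda_1,\lambda_2}$ as provided by Lemma \ref{lem:onedim}. By Lemma \ref{lem:inducedtwist},
\begin{equation*}
\Ind_{\mathfrak a}^{\s_2}(\cc_\mu) = \Ind_{\gamma(\cc h)}^{\s_2}(\cc_\mu) \;\cong\; \bigl( \Ind_{\cc h}^{\s_2}(\cc_\mu^{\gamma}) \bigr)^{\gamma^{-1}}.
\end{equation*}
On $\cc_\mu^{\gamma}$, the element $h \in \cc h$ acts by $\gamma(h)$ applied via the original $\gamma(\cc h)$-action on $\cc_\mu$, so $h \cdot 1 = \mu(\gamma(h))\cdot 1$. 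Hence $\cc_\mu^{\gamma}$ is isomorphic, as a $\cc h$-module, to the one-dimensional module $\cc_{\mu(\gamma(h))}$ used in (\ref{eqn:defX_mu}). Therefore $\Ind_{\cc h}^{\s_2}(\cc_\mu^\gamma) = X(\mu(\gamma(h)))$, yielding the claimed isomorphism.

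For part (ii), the argument runs identically with $\cc h$ replaced by $\cc e$. Writing $\mathfrak a = \gamma(\cc e)$ with $\gamma = \gamma_\lambda$ (for $\mathfrak n_\lambda$) or $\gamma = \sigma$ (for $\mathfrak n^-$, using $\sigma(e) = f$ so that $\sigma(\cc e) = \cc f$), Lemma \ref{lem:inducedtwist} gives
\begin{equation*}
\Ind_{\mathfrak a}^{\s_2}(\cc_\mu) \;\cong\; \bigl( \Ind_{\cc e}^{\s_2}(\cc_\mu^{\gamma}) \bigr)^{\gamma^{-1}},
\end{equation*}
and under the twist $e$ acts on $1 \in \cc_\mu^\gamma$ by the scalar $\mu(\gamma(e))$, so $\cc_\mu^\gamma \cong \cc_{\mu(\gamma(e))}$ as a $\cc e$-module. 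By the definition of $W$, this identifies the inner induced module with $W(\mu(\gamma(e)))$.

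The only subtle point — and the step I would double-check carefully — is the bookkeeping that identifies $\cc_\mu^\gamma$ with the correct scalar module; the formulas (\ref{eqn:gammalambda})--(\ref{eqn:sigma}) show that $\gamma(h)$ and $\gamma(e)$ are elements of $\mathfrak a$, so $\mu(\gamma(h))$ and $\mu(\gamma(e))$ are defined and give the promised complex scalars. Beyond that, there is no real obstacle: Lemmas \ref{lem:onedim} and \ref{lem:inducedtwist} have already done the heavy lifting, and the proposition is essentially a dictionary translating between one-dimensional subalgebras and their standard counterparts.
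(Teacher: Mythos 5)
Your proposal is correct and follows exactly the paper's own route: present $\mathfrak a$ as $\gamma(\cc h)$ or $\gamma(\cc e)$ via Lemma \ref{lem:onedim}, apply Lemma \ref{lem:inducedtwist}, and identify the twisted one-dimensional module $(\cc_\mu)^{\gamma}$ with $\cc_{\mu(\gamma(h))}$ or $\cc_{\mu(\gamma(e))}$ so that the inner induced module becomes $X(\mu(\gamma(h)))$ or $W(\mu(\gamma(e)))$. The paper's proof is just a terser version of the same computation, so nothing further is needed.
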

\begin{proof}
 Note that in $(\cc_\mu)^{\gamma}$, $d.1 = \gamma (d)1 = \mu ( \gamma (d))$ for $d$ belonging to $\mathfrak h$ or $\mathfrak n$ and for the $\gamma$ corresponding to $\mathfrak a$. Therefore, the result follows from Lemmas \ref{lem:inducedtwist} and \ref{lem:onedim}.
\end{proof}
If $\mu(\gamma(e)) \neq 0$, then $W( \mu(\gamma(e)) )^{\gamma^{-1}}$ is a twist of a Whittaker module.

\subsection{Two-dimensional subalgebras of $\mathfrak{sl}_2$ and induced modules} \label{sec:2d}

Now we consider two-dimensional subalgebras of $\s_2$.  Note that any two-dimensional Lie algebra is solvable and any two-dimensional subalgebra of $\s_2$ is maximal. Therefore, the two-dimensional subalgebras of $\s_2$ are precisely the Borel subalgebras of $\s_2$.  Similar to our note prior to Lemma \ref{lem:onedim}, it is well-known that the Borel subalgebras of a finite-dimensional Lie algebra are conjugate via $\g$-automorphisms.(Again, see \cite{H}.) Here, we identify specific automorphisms that allow a connection to polynomial subalgebras of $\Vir$.

\begin{lem}\label{lem:2dimSubs}
The two-dimensional subalgebras of $\s_2$ are exactly
\begin{itemize}
\item[(i)] $\B^+={\rm span}_\cc \{ h, e\}$;
\item[(ii)] $\B^-=\sigma(\B^+)={\rm span}_\cc \{ h, f\}$;
\item[(iii)] $\mathfrak{b}_\lambda:= \gamma_\lambda(\B^+)$ 
for $\lambda \in \cc^*$. 
\end{itemize}
\end{lem}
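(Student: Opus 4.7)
The plan is to classify two-dimensional subalgebras $\mathfrak{b} \subseteq \s_2$ by analyzing the derived subalgebra $[\mathfrak{b}, \mathfrak{b}]$ and then reducing to the standard Borel $\B^+$ via the automorphisms introduced above. First I would observe that $\mathfrak{b}$ cannot be abelian, because every nonzero element of $\s_2$ has a one-dimensional centralizer (either a nilpotent line or a Cartan subalgebra), so no two linearly independent elements of $\s_2$ can commute. Hence $[\mathfrak{b}, \mathfrak{b}] = \cc u$ for some $0 \neq u \in \mathfrak{b}$, and after choosing $v \in \mathfrak{b} \setminus \cc u$ and rescaling one may arrange $[v, u] = u$.

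The key step---which I expect to be the main obstacle---is showing that $u$ is nilpotent in $\s_2$, since that is what allows us to apply Lemma \ref{lem:onedim}. My approach uses the Killing form $K$: combining $[v,u]=u$ with symmetry and ad-invariance gives
\[
K(u, u) = K(u, [v, u]) = K([v, u], u) = -K(u, [v, u]) = -K(u, u),
\]
so $K(u, u) = 0$. A direct calculation in the basis $\{e, h, f\}$ yields $K(\alpha e + \beta h + \delta f, \alpha e + \beta h + \delta f) = 8(\beta^2 + \alpha\delta)$, which vanishes precisely when the corresponding traceless $2 \times 2$ matrix is nilpotent. Therefore $u$ is nilpotent, and Lemma \ref{lem:onedim} forces $\cc u$ to be either $\cc f = \mathfrak{n}^-$ or $\mathfrak{n}_\lambda = \gamma_\lambda(\cc e)$ for some $\lambda \in \cc$.

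The remainder is a direct case analysis. In the case $\cc u = \cc f$, I would write a complement as $v = \alpha e + \beta h$ modulo $\cc f$; the requirement $[v, f] = \alpha h - 2\beta f \in \cc f$ then forces $\alpha = 0$, giving $\mathfrak{b} = \B^-$. In the case $\cc u = \mathfrak{n}_\lambda$, I would apply $\gamma_\lambda^{-1}$ to reduce to the situation in which the derived subalgebra of $\gamma_\lambda^{-1}(\mathfrak{b})$ equals $\cc e$; an analogous bracket computation with $v' = \beta h + \delta f$ then forces $\delta = 0$, so $\gamma_\lambda^{-1}(\mathfrak{b}) = \B^+$ and hence $\mathfrak{b} = \mathfrak{b}_\lambda$ (with $\lambda = 0$ recovering $\B^+$ itself). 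Finally, that each of the three listed families is in fact a two-dimensional subalgebra is clear by inspection for $\B^+$ and follows for the others from $\sigma$ and $\gamma_\lambda$ being automorphisms.
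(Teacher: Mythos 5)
Your argument is correct, but it follows a genuinely different route from the paper's. The paper proceeds by brute force on bases: it notes that ${\rm span}_\cc\{e,f\}$ is not a subalgebra, so $\mathfrak b$ contains an element with nonzero $h$-coefficient, reduces to a basis of the form $\{h+\alpha f, e+\beta f\}$ or $\{h+\alpha e, f+\beta e\}$, and imposes closure under the bracket to get the constraint $\alpha^2=4\beta$, which directly produces $\B^\pm$ and $\mathfrak b_\lambda$. You instead argue structurally: non-abelianness via the centralizer observation, then the identity $[v,u]=u$ combined with the invariance and symmetry of the Killing form to conclude $K(u,u)=0$, the explicit formula $K(x,x)=8(\beta^2+\alpha\delta)$ to identify the isotropic lines with the nilpotent lines, and finally the already-established Lemma \ref{lem:onedim} to pin down $\cc u$ as $\cc f$ or $\gamma_\lambda(\cc e)$ before a short normalizer computation. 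Each step checks out (in particular $[\alpha e+\beta h, f]=\alpha h-2\beta f$ and $[\beta h+\delta f, e]=2\beta e-\delta h$ force $\alpha=0$ and $\delta=0$ respectively, and $\lambda=0$ recovering $\B^+$ matches the paper's remark that $\B^+=\mathfrak b_0$). What your approach buys is conceptual content --- it exhibits each Borel as the normalizer of the nilpotent line it contains, reuses the one-dimensional classification, and is the pattern that generalizes beyond rank one --- at the cost of invoking the Killing form and a couple of auxiliary facts (the centralizer claim, the normal form $[v,u]=u$) that the paper's five-line computation avoids entirely.
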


It is straightforward to check that $\gamma_{\lambda_1, \lambda_2}(\B^+) = \gamma_{\lambda_1}(\B^+)$ for $\lambda_1, \lambda_2 \in \cc^*$ with $\lambda_1 \neq \lambda_2$. Here, we choose to focus on the form $\gamma_{\lambda_1}(\B^+)$ because this form maps more clearly to a subalgebra of the Virasoro algebra, as described in Section \ref{sec:connectToVir}.

\begin{proof}
Note that (\ref{eqn:gammalambda}) yields
\begin{equation} \label{eq:gammalambda}
\gamma_\lambda(h) = h+2 \lambda f, \quad \gamma_\lambda(e+\lambda h)=e + \lambda^2 f.
\end{equation}
Thus,  $\mathfrak{b}_\lambda={\rm span}_\cc \{ h+2 \lambda f, e + \lambda^2 f\}$.

Let $\mathfrak{b}$ be a two-dimensional subalgebra of $\s_2$. Since $\mbox{span} \{e, f\}$ is not a subalgebra, it must be that $\mathfrak{b}$ contains at least one element with a nonzero coefficient of $h$.  It is then straightforward to show in this case that $\mathfrak{b}$ has a basis of the form $\{h+ \alpha f,  e+ \beta f\}$ or  $\{h+ \alpha e,  f+ \beta e\}$.  In the first case, we note that 
$$
[h+\alpha f,e+ \beta f] = -\alpha h+2e-2\beta f \in \mathfrak{b}. 
$$
This forces $\alpha^2=4 \beta$. If $\alpha=0$, this gives $\B^+$. For $\alpha \neq 0$, we see that $\mathfrak b = \mathfrak b_\lambda$, where $\lambda = \frac{\alpha}{2}$.  Considering a basis of the second type, we either have $\B^-$ or $\mathfrak{b}_\lambda$ where $\lambda=\frac{4}{\alpha}$.
\end{proof}

Note that $\B^+=\gamma_0 (\B^+)=\mathfrak b_0$.  Since all two-dimensional Lie algebras $\B$ are solvable, their simple modules are all one-dimensional. As before, we may  express them in the form $\cc_\mu$, where $\mu: \B \rightarrow \cc$ is a Lie algebra homomorphism and $\cc_\mu$ is defined by $x1=\mu(x)$ for all $x \in \B$.

\begin{prop} \label{prop:Vermaiso}
Let $\lambda \in \cc$ and $\mu: \B_\lambda \rightarrow \cc$ be a homomorphism, and assume $\cc_\mu$ is a $\B_\lambda$-module.  Then,
$$\Ind_{\B_\lambda}^{\s_2} ( \cc_\mu ) \cong M(\mu(\gamma_\lambda(h)))^{\gamma_\lambda^{-1}},$$
where $M(\mu(\gamma_\lambda(h)) )$ is the Verma module of highest weight $\mu(\gamma_\lambda(h))$.
\end{prop}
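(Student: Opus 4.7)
The plan is to apply Lemma \ref{lem:inducedtwist} with $\g = \s_2$, $\mathfrak{b} = \B^+$, and $\gamma = \gamma_\lambda$, using the identification $\B_\lambda = \gamma_\lambda(\B^+)$ that came out of the preceding lemma. With these choices, the lemma specializes to
$$\Ind_{\B_\lambda}^{\s_2}(\cc_\mu) \;=\; \Ind_{\gamma_\lambda(\B^+)}^{\s_2}(\cc_\mu) \;\cong\; \left(\Ind_{\B^+}^{\s_2}\bigl((\cc_\mu)^{\gamma_\lambda}\bigr)\right)^{\gamma_\lambda^{-1}},$$
so everything reduces to identifying the twisted one-dimensional module $(\cc_\mu)^{\gamma_\lambda}$ as a $\B^+$-module and recognizing the resulting induced module as a standard Verma module.

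By definition of the twist, for $x \in \B^+$ the action on $(\cc_\mu)^{\gamma_\lambda}$ is $x \cdot 1 = \gamma_\lambda(x) \cdot 1 = \mu(\gamma_\lambda(x))$. Thus $h$ acts by the scalar $\mu(\gamma_\lambda(h))$, while $e$ acts by $\mu(\gamma_\lambda(e))$. The only point that needs verification is that this $e$-action is zero, so that $(\cc_\mu)^{\gamma_\lambda}$ really is the standard one-dimensional $\B^+$-module of weight $\mu(\gamma_\lambda(h))$. This follows from the hypothesis that $\mu : \B_\lambda \to \cc$ is a Lie algebra homomorphism: applying $\mu$ to the relation $[\gamma_\lambda(h),\gamma_\lambda(e)] = 2\gamma_\lambda(e)$ inside $\B_\lambda$ forces $2\mu(\gamma_\lambda(e)) = 0$, whence $\mu(\gamma_\lambda(e)) = 0$.

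With $(\cc_\mu)^{\gamma_\lambda}$ identified as the one-dimensional $\B^+$-module on which $h$ acts by $\mu(\gamma_\lambda(h))$ and $e$ acts by $0$, the module $\Ind_{\B^+}^{\s_2}\bigl((\cc_\mu)^{\gamma_\lambda}\bigr)$ is by definition the Verma module $M(\mu(\gamma_\lambda(h)))$. Substituting this into the displayed isomorphism above gives
$$\Ind_{\B_\lambda}^{\s_2}(\cc_\mu) \;\cong\; M(\mu(\gamma_\lambda(h)))^{\gamma_\lambda^{-1}},$$
which is the desired conclusion. There is no real obstacle here; the argument is a direct application of Lemma \ref{lem:inducedtwist}, and the only thing to be careful about is the automatic vanishing of $e$ on any one-dimensional $\B^+$-module, which is what converts the twisted induction into a genuine Verma construction.
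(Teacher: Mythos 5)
Your proof is correct and follows essentially the same route as the paper: apply Lemma \ref{lem:inducedtwist} with $\mathfrak b = \B^+$ and $\gamma = \gamma_\lambda$, then identify $(\cc_\mu)^{\gamma_\lambda}$ as the one-dimensional $\B^+$-module of weight $\mu(\gamma_\lambda(h))$. Your verification that $e$ acts by zero (via $\mu$ killing the bracket $[\gamma_\lambda(h),\gamma_\lambda(e)] = 2\gamma_\lambda(e)$) is just a spelled-out version of the paper's observation that $e \in [\B^+,\B^+]$ and $[\B^+,\B^+]$ annihilates any one-dimensional module.
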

\begin{proof}
By Lemma \ref{lem:inducedtwist}, we know that $\Ind_{\B_\lambda}^{\s_2} ( \cc_\mu ) \cong \left(  \Ind_{\B^+}^{\s_2} (( \cc_\mu )^{\gamma_\lambda} ) \right)^{\gamma_\lambda^{-1}}$, so it suffices to show that $\Ind_{\B^+}^{\s_2} (( \cc_\mu )^{\gamma_\lambda}) \cong M( \mu(\gamma_\lambda(h) )$.  But this is clear as the $\B^+$-module structure on $( \cc_\mu )^{\gamma_\lambda}$ is given by $h.1 = \mu(\gamma_\lambda(h))$.  Moreover, $e \in [ \B^+, \B^+ ]$, and $[ \B^+, \B^+ ] .1 = 0$.
\end{proof}

In the context of studying $U(\s_{n+1})$ modules which are free rank-$1$ $U(\mathfrak h)$-modules, Nilsson \cite{Nilsson} defined a certain $\s_2$-module $F_{(a,1)}M_b$ and showed these modules are simple if and only if $2b \not\in \z_{\geq 0}$. One can construct an isomorphism $F_{(\lambda,1)}M_{\frac12 \mu(\gamma_\lambda(h)) \lambda^{-1}} \cong \Ind_{\B_\lambda}^{\s_2} ( \cc_\mu )$. Proposition \ref{prop:Vermaiso} then reproduces Nilsson's irreducibility result in the $\s_2$ case.

\section{Restrictions of modules for the Virasoro algebra} \label{sec:connectToVir}

The Virasoro algebra $\Vir$ is a Lie algebra over $\cc$ with a basis $\{ z, e_i \mid i \in \z\}$, such that $z$ is central and, for $i, j \in \z$, $[e_i, e_j] = (j-i)e_{i+j} + \delta_{j,-i} \frac{i^3-i}{12} z$.  There is a natural embedding of $\s_2$ into $\Vir$: 
\begin{equation}\label{eqn:sl2-embed}
h \mapsto 2 e_0, \qquad e \mapsto e_1, \qquad f \mapsto - e_{-1}.
\end{equation}
Let $\mathfrak{sl}_2^{\Vir}=\mbox{span} \{e_1, e_0, e_{-1} \}$ be the image of $\s_2$ under this embedding. By an abuse of notation, we will consider the automorphisms of $\s_2$ to be automorphisms of $\s_2^\Vir$ via this embedding.

In this section, we connect the induced $\s_2$-modules considered in the first part of this paper with $\Vir$-modules induced from certain ``polynomial" subalgebras of $\Vir$. A central idea in making this connection is that subalgbras of $\Vir$ of of codimension $1$, $2$, or $3$, can be mapped, via intersection, to subalgebras of $\s_2$.

\subsection{The polynomial subalgebras of $\Vir$ and their induced modules}

Recapping results from \cite{ondWies18}, we describe the construction $\Vir$-subalgebras associated with Laurent polynomials $f \in \cc [t^{\pm 1}]$; and corresponding induced $\Vir$-modules. 

For $i \in \z$, identify $e_i \in \Vir$ with $t^i \in \cc [t^{\pm 1}]$. Extending this to linear combinations, we identify any Laurent polynomial $f(t)\in \cc [t^{\pm 1}]$ with an element of $\Vir$ and show that the subspace $\Vir^{f(t)}= \mbox{span} \{ z, \ t^i f(t) \mid i \in \z\}$ is, in fact, a Lie subalgebra of $\Vir$.  Since $\Vir^{f(t)} = \Vir^{t^k f(t)}$ for all $k \in \z$, it is important, in order to avoid redundancy, that we always write $f(t)$ as an element of the polynomial ring $\cc [t]$ and assume $f(t)$ has a nonzero constant coefficient.  In particular, we must assume the roots of $f(t)$ are nonzero.  If, for example, $f(t)=t-\lambda$, $\lambda \in \cc^*$, then 
$$\Vir^{t- \lambda}= \mbox{span} \{ z, e_i-\lambda e_{i-1} \mid i \in \z \}.$$
Similarly, for $\lambda_1, \lambda_2 \in \cc^*$ and $f(t)=(t-\lambda_1)(t-\lambda_2) = t^2 - ( \lambda_1 + \lambda_2 )t + \lambda_1 \lambda_2$, 
$$
\Vir^{(t-\lambda_1)(t-\lambda_2)} = \mbox{span} \{ z, e_i-(\lambda_1+\lambda_2) e_{i-1} +\lambda_1\lambda_2 e_{i-2}\mid i \in \z  \}.
$$

Suppose $f(t)=\prod_{i=1}^k (t-\lambda_i)^{n_i} $, where $\lambda_1, \ldots, \lambda_k \in \cc^*$ are distinct and $n_1, \ldots, n_k \in \z_{\geq 1}$. We demonstrate in \cite{ondWies18} that a linear map $\mu: \Vir^{f(t)} \rightarrow \cc$ is a Lie algebra homomorphism precisely when both $\mu(z)=0$ and there exist polynomials $p_1, \ldots,p_k$ with $\deg(p_i) < n_i$ such that $\mu(t^j f(t)) =p_1(j)\lambda_1^j+ \cdots + p_k(j) \lambda_k^j$.  In particular, the one-dimensional $\Vir^{f(t)}$-modules have the form $\cc_\mu$ for such a homomorphism $\mu$. 

We then define the induced modules $V^{f(t)}_\mu= \Ind_{\Vir^{f(t)}}^{\Vir} (\cc_\mu)$ and study the simplicity of these modules. As part of this work, we show the modules $V^{f(t)}_\mu= \Ind_{\Vir^{f(t)}}^{\Vir} (\cc_\mu)$ are non-isomorphic for distinct choices of $f(t)$ and $\mu$ and are simple provided $\mu$ satisfies some nondegeneracy condtions. In particular, the modules $V^{t- \lambda}_\mu$ and $V^{(t-\lambda_1)(t-\lambda_2)}_\mu$ are simple for all nonzero $\mu$.

\subsection{Restriction}

In Lemma \ref{lem:ResIndCommute} below, we consider the situation of a Lie algebra $\g$ with subalgebras $\mathfrak a$ and $\mathfrak b$ and a $\mathfrak b$-module $V$ such that the induced module $\Ind_\B^\g (V)$ is generated over $U( \mathfrak a)$ by $V$.  The following lemmas are used to show that Lemma \ref{lem:ResIndCommute} can be applied in the context of $\Vir$ with respect to certain subalgebras. 

\begin{lem}\label{lem:subalgebraAction}
Let $\g$ be a Lie algebra with $\mathfrak a \subseteq \g$ a Lie subalgebra, and suppose $V$ is a $\g$-module.  Assume $v \in V$ satisfies $\g v \subseteq \mathfrak a v$.  Then $U(\g) v \subseteq U(\mathfrak a) v$.
\end{lem}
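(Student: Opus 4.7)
The plan is to prove the stronger statement that $W := U(\mathfrak{a})v$ is a $\g$-submodule of $V$. Since $v \in W$ and $W$ is automatically closed under $U(\mathfrak{a})$-action, establishing $\g W \subseteq W$ upgrades it to a $U(\g)$-submodule, which immediately gives $U(\g)v \subseteq W = U(\mathfrak{a})v$ as desired.

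To show $\g W \subseteq W$, I would fix $x \in \g$ and prove that $xuv \in U(\mathfrak{a})v$ for every $u \in U(\mathfrak{a})$, proceeding by induction on the PBW filtration degree of $u$ in $U(\mathfrak{a})$. The base case $u \in \mathbb{C}$ reduces to the hypothesis: $xv \in \mathfrak{a}v \subseteq U(\mathfrak{a})v$. For the inductive step, write $u = a u'$ with $a \in \mathfrak{a}$ and $u' \in U(\mathfrak{a})$ of strictly smaller filtration degree, and rewrite
\[
xuv \;=\; x a u' v \;=\; a(x u' v) + [x,a]\, u' v.
\]
The first term lies in $a\, U(\mathfrak{a}) v \subseteq U(\mathfrak{a}) v$ by the inductive hypothesis applied to $u'$. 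The second term uses that $\mathfrak{a}$ is a subalgebra of $\g$, so $[x,a] \in \g$; applying the inductive hypothesis to $u'$ once more (now with $[x,a]$ in place of $x$) gives $[x,a] u' v \in U(\mathfrak{a}) v$. Summing, $xuv \in U(\mathfrak{a}) v = W$.

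There is no serious obstacle here; the argument is a routine PBW-style induction. The one point worth noting is that the hypothesis is stated only for the single vector $v$ (namely $\g v \subseteq \mathfrak{a} v$), but this is exactly what is needed to seed the induction, and the bracket identity $xa = ax + [x,a]$ together with $[x,a] \in \g$ propagates the conclusion through all of $U(\mathfrak{a})$.
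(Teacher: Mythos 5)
Your proof is correct and follows essentially the same route as the paper's: both are inductions on monomial length that use the identity $xa = ax + [x,a]$ together with $[x,a] \in \g$ to push the $\g$-element past the $U(\mathfrak a)$-factors. The only cosmetic difference is that you induct on the filtration degree of the $U(\mathfrak a)$-element and package the conclusion as ``$U(\mathfrak a)v$ is a $\g$-submodule,'' whereas the paper inducts on the length of a monomial $x_1\cdots x_n$ in $\g$ applied to $v$; the underlying computation is identical.
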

\begin{proof}
It is enough to prove that for any $x_1, \ldots, x_n \in \g$, then $(x_1 \cdots x_n) v =\sum_{i}  s_{i,1} s_{i,2} \cdots  s_{i,n} v$,
where $s_{i,j} \in \mathfrak a$ and the sum is over finitely many $i$.    We show this by induction on $n$, noting that the $n=1$ case is given. 

By assumption, $x_1 v= S_1 v$ for some $S_1 \in \mathfrak a$; and by the inductive hypothesis, $(x_2 \cdots x_n) v = S_2 v$ where $S_2 =  \sum_{i}  s_{i,1} s_{i,2} \cdots  s_{i,n-1} v$. Then 
$$(x_1 \cdots x_n) v = x_1 S_2v = (S_2 x_1 + [x_1, S_2])v = S_2 S_1v + [x_1, S_2] v.$$
In this expression, $S_2 S_1v$ has the desire form. Moreover, 
$$[x_1, S_2] =\sum_{i}  [x_1,  s_{i,1} s_{i,2} \cdots  s_{i,n-1}]= \sum_i y_{i,1} y_{i,2} \cdots  y_{i,n-1}$$ for some $y_{i,j} \in \g$. Thus the term $[x_1, S_2] v$ also has the desired form by the inductive hypothesis.
\end{proof}

\begin{cor}\label{cor:sl2-cyclic}
Suppose $f(t) \in \cc [t] \subseteq \cc [t^{\pm 1}]$ has the form $\sum_{i=0}^k a_i t^i$, where $a_0, a_k \neq 0$ and $k \in \{ 1, 2, 3 \}$.  Then $\Vir^{f(t)} + \s_2^\Vir = \Vir$.

Moreover, if $ \mu : \Vir^{f(t)} \to \cc$ is an algebra homomorphism and if $v_{\mu}$ is the canonical generator of $V_{\mu}^{f(t)}$, then $V_{\mu}^{f(t)} = U( \mathfrak{sl}_2^\Vir ) v_{ \mu}$.
\end{cor}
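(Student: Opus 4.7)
The plan splits along the two claims of the corollary.

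For the first assertion, I identify $\Vir/\cc z$ with $\cc[t^{\pm1}]$ via $e_i\leftrightarrow t^i$. Under this identification $\Vir^f$ corresponds to $f\cdot\cc[t^{\pm1}]$, so
$$
\Vir/\Vir^f \;\cong\; \cc[t^{\pm1}]\big/f\cdot\cc[t^{\pm1}].
$$
Because $a_0\neq 0$, the element $t$ is invertible modulo $f$, and the quotient reduces to $\cc[t]/(f)$, which has $\cc$-dimension $k$. Since $\dim\s_2^\Vir=3\geq k$, to conclude $\Vir^f+\s_2^\Vir=\Vir$ it suffices to show that the images of $e_{-1},e_0,e_1$---namely $t^{-1},1,t$---span $\cc[t]/(f)$. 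For $k=1$ or $k=2$ this is immediate. For $k=3$ I argue linear independence directly: if $\alpha t^{-1}+\beta+\gamma t = f\cdot g$ for some nonzero Laurent polynomial $g$, then, using $a_0\neq 0$ and $a_3\neq 0$, the support of $fg$ would run from the lowest index of $g$ to its highest index plus $3$, which cannot fit inside $\{-1,0,1\}$; hence $g=0$ and $\alpha=\beta=\gamma=0$.

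For the second assertion, part (1) lets me choose a vector-space complement $\mathfrak c\subseteq\s_2^\Vir$ to $\Vir^f$ in $\Vir$, yielding $\Vir=\Vir^f\oplus\mathfrak c$ with $\dim\mathfrak c=k$. Fix a basis $\{x_1,\ldots,x_k\}$ of $\mathfrak c$. By the Poincar\'e--Birkhoff--Witt theorem, the ordered monomials $x_1^{a_1}\cdots x_k^{a_k}$ times a PBW basis of $U(\Vir^f)$ form a $\cc$-basis of $U(\Vir)$. Since $v_\mu$ is a common eigenvector for $U(\Vir^f)$ through the character $\mu$,
$$
V_\mu^f \;=\; U(\Vir)v_\mu \;=\; {\rm span}_\cc\bigl\{\,x_1^{a_1}\cdots x_k^{a_k}\,v_\mu : a_1,\ldots,a_k\in\z_{\geq 0}\,\bigr\} \;\subseteq\; U(\s_2^\Vir)v_\mu,
$$
where the final containment uses $\mathfrak c\subseteq\s_2^\Vir$; the reverse inclusion is automatic.

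The only genuine obstacle is the spanning verification for $k=3$ in part (1); once part (1) is in hand, the PBW step in part (2) is routine. An alternative route for part (2) is to mimic the induction in Lemma \ref{lem:subalgebraAction} with the weaker hypothesis $\Vir\cdot v_\mu\subseteq U(\s_2^\Vir)v_\mu$ (which holds by part (1) since $\Vir^f$ acts on $v_\mu$ by scalars), but the PBW argument is cleaner and more direct.
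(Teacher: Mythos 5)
Your proposal is correct. For the first assertion you and the paper do the same thing: both reduce to showing that the images of $t^{-1}, t^{0}, t^{1}$ span $\Vir/\Vir^f \cong \cc[t^{\pm 1}]/f\cc[t^{\pm1}]$; the paper simply asserts the spanning ``since $k \le 3$,'' while you supply the support/degree count that justifies it (and the identification with $\cc[t]/(f)$, giving the dimension $k$ that also appears in the paper's Corollary \ref{cor:codimIntersect}). For the second assertion your route genuinely differs: the paper invokes its Lemma \ref{lem:subalgebraAction}, an induction on the length of monomials using $[x, s_1\cdots s_{n-1}]$ rewriting, applied with $\g = \Vir$ and $\mathfrak a = \s_2^\Vir$; you instead pick a complement $\mathfrak c \subseteq \s_2^\Vir$ of $\Vir^f$ and use the PBW factorization $U(\Vir) = {\rm span}\{x_1^{a_1}\cdots x_k^{a_k}\}\cdot U(\Vir^f)$ together with the fact that $U(\Vir^f)$ acts on $v_\mu$ through the character $\mu$. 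The PBW argument is shorter and, as a bonus, sidesteps a small imprecision in the paper's deduction (strictly, $\Vir = \Vir^f + \s_2^\Vir$ gives $\Vir v_\mu \subseteq \s_2^\Vir v_\mu + \cc v_\mu$ rather than $\Vir v_\mu \subseteq \s_2^\Vir v_\mu$, though the lemma's induction goes through with the weaker hypothesis, as you note in your closing remark). The paper's lemma, on the other hand, is stated for arbitrary $\g \supseteq \mathfrak a$ and a vector in an arbitrary module, so it does not require choosing a complement inside $\mathfrak a$ or an adapted PBW ordering; both are standard and both are adequate here.
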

\begin{proof}

To show that $\Vir^{f(t)} + \s_2^\Vir = \Vir$, note that the set $\{ t^i ( a_0 + \cdots + a_k t^k ) \mid i \in \z \} \cup \{ t^{-1} , t^0, t^1 \}$ spans $\Vir$ since $k \le 3$.

For the second part of the claim, we observe that $\Vir^{f(t)} + \s_2^\Vir = \Vir$ implies $\Vir \, v_\mu \subseteq \mathfrak{sl}_2^\Vir v_\mu$.  Then we may apply Lemma \ref{lem:subalgebraAction} with $\g=\Vir$ and $\mathfrak a= \s_2$. 
\end{proof}

\begin{cor}\label{cor:codimIntersect}
Suppose $f(t) \in \cc [t] \subseteq \cc [t^{\pm 1}]$ has the form $\sum_{i=0}^k a_i t^i$, where $a_0, a_k \neq 0$ and $k \in \{ 1, 2, 3 \}$.  Then $\Vir / \Vir^{f(t)} \cong \s_2^\Vir / ( \s_2^\Vir \cap \Vir^{f(t)} )$ as vector spaces.  Consequently $\dim \left( \s_2^\Vir / ( \s_2^\Vir \cap \Vir^{f(t)} ) \right) = k$ and $\dim ( \s_2^\Vir \cap \Vir^{f(t)} ) = 3-k$.
\end{cor}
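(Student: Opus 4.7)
The plan is to deduce the isomorphism from Corollary \ref{cor:sl2-cyclic} via the second isomorphism theorem for vector spaces. Specifically, $\Vir = \Vir^f + \s_2^\Vir$ immediately gives
$$\Vir/\Vir^f = (\Vir^f + \s_2^\Vir)/\Vir^f \cong \s_2^\Vir / (\s_2^\Vir \cap \Vir^f),$$
which is the desired vector-space isomorphism.

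It then remains to show that $\dim(\Vir/\Vir^f) = k$. Since $z \in \Vir^f$, we have $\Vir/\Vir^f \cong (\Vir/\cc z)/(\Vir^f/\cc z)$. The quotient $\Vir/\cc z$ is naturally identified, as a vector space, with $\cc[t^{\pm 1}]$ via $e_i \mapsto t^i$, and under this identification $\Vir^f/\cc z$ corresponds to the principal ideal $f \cdot \cc[t^{\pm 1}]$ (this is immediate from $\Vir^f = \cc z \oplus \mbox{span}\{t^i f \mid i \in \z\}$, whose non-central part is spanned by the Laurent multiples of $f$). So the computation reduces to showing $\dim_\cc \cc[t^{\pm 1}]/(f) = k$. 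Because $a_0 \neq 0$, the variable $t$ is coprime to $f$ and hence a unit modulo $f$; this forces the natural map $\cc[t]/(f) \to \cc[t^{\pm 1}]/(f)$ to be an isomorphism, and since $a_k \neq 0$, Euclidean division in $\cc[t]$ gives $\dim_\cc \cc[t]/(f) = k$.

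Combining the two steps yields $\dim(\s_2^\Vir/(\s_2^\Vir \cap \Vir^f)) = k$, and since $\dim \s_2^\Vir = 3$ we conclude $\dim(\s_2^\Vir \cap \Vir^f) = 3 - k$. There is no real mathematical obstacle here; the only point requiring minor care is the reduction from the Laurent polynomial ring to the ordinary polynomial ring modulo $f$, which is what allows $k = \deg f$ to control the codimension. The hypothesis $k \in \{1,2,3\}$ is not used in this proof per se (it is used upstream to guarantee $\Vir^f + \s_2^\Vir = \Vir$ in Corollary \ref{cor:sl2-cyclic}, without which the second isomorphism theorem would not collapse the two quotients).
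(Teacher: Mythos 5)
Your proof is correct and follows essentially the same route as the paper: the isomorphism is obtained from $\Vir = \Vir^f + \s_2^\Vir$ (Corollary \ref{cor:sl2-cyclic}) together with the second isomorphism theorem. The paper states the dimension count $\dim(\Vir/\Vir^f)=k$ without justification, so your identification of $\Vir^f/\cc z$ with the ideal $f\cdot\cc[t^{\pm 1}]$ and the reduction to $\dim_\cc \cc[t]/(f)=k$ (using $a_0\neq 0$ to pass from Laurent to ordinary polynomials) is a welcome elaboration rather than a divergence.
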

\begin{proof}
The fact that $\Vir / \Vir^{f(t)} \cong \s_2^\Vir / ( \s_2^\Vir \cap \Vir^{f(t)} )$ follows immediately from the second isomorphism theorem.
\end{proof}

\begin{lem} \label{lem:ResIndCommute}
Let $\g$ be a Lie algebra and $\mathfrak{a}, \B$ be subalgebras of $\g$. Let $V$ be a $\B$-module and suppose $\Ind_\B^\g (V)=U(\mathfrak{a}) \otimes_{U(\B)} V$ as vector spaces. Then
$$\Ind^{\mathfrak a}_{\mathfrak a \cap \B}  \Res^{\B}_{\mathfrak a \cap \B}  (V) \cong \Res_{\mathfrak a}^{\g}  \Ind^{\g}_{\B} (V)$$
as $\mathfrak{a}$-modules.
\end{lem}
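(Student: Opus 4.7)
My plan is to write down the natural candidate map $\phi$ from the left-hand side to the right-hand side, verify it is a well-defined $\mathfrak{a}$-equivariant homomorphism, and then check that it is bijective. Writing $\mathfrak{c} = \mathfrak{a} \cap \B$, I define
$$\phi : U(\mathfrak{a}) \otimes_{U(\mathfrak{c})} V \longrightarrow U(\g) \otimes_{U(\B)} V, \qquad u \otimes v \longmapsto u \otimes v,$$
where on the right $u \in U(\mathfrak{a})$ is viewed as an element of $U(\g)$ via the PBW embedding $U(\mathfrak{a}) \hookrightarrow U(\g)$. Well-definedness is immediate because $\mathfrak{c} \subseteq \B$ makes the balancing relation on the target coarser, and $\mathfrak{a}$-linearity is a direct check since the action on both sides is by left multiplication.

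Surjectivity is exactly the content of the hypothesis, which I read as the statement that $\Ind^\g_\B(V)$ is generated as a $U(\mathfrak{a})$-module by the canonical subspace $1 \otimes V$ (this is the reading consistent with Lemma \ref{lem:subalgebraAction} and with Corollary \ref{cor:sl2-cyclic} in the intended applications). Each generator $1 \otimes v$ lies in the image of $\phi$, and the image is closed under $U(\mathfrak{a})$, so $\phi$ is surjective.

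The main obstacle is injectivity, which I expect to handle using the Poincar\'e--Birkhoff--Witt theorem. Pick a basis $\{z_\alpha\}$ of $\mathfrak{c}$, extend it to a basis $\{z_\alpha\} \cup \{x_\beta\}$ of $\mathfrak{a}$, extend $\{z_\alpha\}$ to a basis $\{z_\alpha\} \cup \{b_\gamma\}$ of $\B$, and then --- crucially using $\mathfrak{a} \cap \B = \mathfrak{c}$, so that the images of the $x_\beta$'s in $\g/\B$ are linearly independent --- extend further to a basis $\{z_\alpha\} \cup \{x_\beta\} \cup \{b_\gamma\} \cup \{y_\delta\}$ of $\g$. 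By PBW, $U(\mathfrak{a})$ is a free right $U(\mathfrak{c})$-module on the ordered monomials in the $x_\beta$'s, and $U(\g)$ is a free right $U(\B)$-module on the ordered monomials in the $x_\beta$'s together with the $y_\delta$'s. Tensoring with $V$ over the respective subalgebras, $U(\mathfrak{a}) \otimes_{U(\mathfrak{c})} V$ acquires a $\cc$-basis $\{X \otimes v_\epsilon\}$ indexed by monomials $X$ in the $x_\beta$'s and a basis $\{v_\epsilon\}$ of $V$, while $U(\g) \otimes_{U(\B)} V$ acquires a basis $\{M \otimes v_\epsilon\}$ indexed by the larger monomial set in $\{x_\beta\} \cup \{y_\delta\}$. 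The map $\phi$ sends each $X \otimes v_\epsilon$ to the corresponding basis element $X \otimes v_\epsilon$ on the right (a monomial using only $x$'s), so it maps one basis injectively into the other. This yields injectivity, which together with surjectivity proves $\phi$ is the desired $\mathfrak{a}$-module isomorphism.
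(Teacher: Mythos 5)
Your proof is correct and follows the same route as the paper: identify the two sides via the natural map $u \otimes v \mapsto u \otimes v$, with surjectivity coming from the hypothesis that $\Ind_\B^\g(V)$ is generated over $U(\mathfrak a)$ by $1 \otimes V$. The paper's own proof simply asserts the vector-space identification $U(\mathfrak a) \otimes_{U(\B)} V \cong U(\mathfrak a) \otimes_{U(\mathfrak a \cap \B)} V$ without justification; your PBW argument --- choosing compatible bases of $\mathfrak a \cap \B$, $\mathfrak a$, $\B$, and $\g$ so that both sides become free modules on monomials in the $x_\beta$'s (resp.\ the $x_\beta$'s and $y_\delta$'s) --- supplies exactly the injectivity check that the paper leaves implicit, and is the one step that genuinely needs $\mathfrak a \cap \B$ rather than some smaller subalgebra.
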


\begin{proof}
Because $\Ind_\B^\g (V)=U(\mathfrak{a}) \otimes_{U(\B)} V$ as vector spaces, it follows that $\Res_{\mathfrak a}^{\g}  \Ind^{\g}_{\B} (V) = U(\mathfrak{a}) \otimes_{U( \B )} V$ as vector spaces. We also have $\Ind^{\mathfrak a}_{\mathfrak a \cap \B}  \Res^{\B}_{\mathfrak a \cap \B}  (V) = U(\mathfrak{a}) \otimes_{U( \mathfrak a \cap \B )} V$ as vector spaces.  The PBW theorem implies that as vector spaces  $U(\mathfrak{a}) \otimes_{U( \mathfrak a \cap \B )} V \cong U(\mathfrak{a}) \otimes_{U( \B )} V$ where $u \otimes v \mapsto u \otimes v$ for $u \in U(\mathfrak{a}) $ and $v \in V$.

To show this extends to an $\mathfrak{a}$-module isomorphism, note that because $\Ind_\B^\g (V)=U(\mathfrak{a}) \otimes_{U(\B)} V$ as vector spaces, then $\Res_{\mathfrak a}^{\g} \Ind^{\g}_{\B} (V) = U(\mathfrak{a}) \otimes_{U( \B )} V$ as $\mathfrak{a}$-modules. In particular, the $\mathfrak a$-action is given by $a ( u \otimes v) = (au) \otimes v$. Similarly, the action of $\mathfrak a$ on the $\mathfrak a$-module $\Ind^{\mathfrak a}_{\mathfrak a \cap \B}  \Res^{\B}_{\mathfrak a \cap \B}  (V)$ is also given by $a ( u \otimes v) = (au) \otimes v$. 
\end{proof}

\begin{prop}\label{prop:resVirMod-sl2}
Suppose $f(t) \in \cc [t] \subseteq \cc [t^{\pm 1}]$ has the form $\sum_{i=0}^k a_i t^i$, where $a_0, a_k \neq 0$ and $k \in \{ 1, 2, 3 \}$, and let $\mu : \Vir^{f(t)} \to \cc$ be an algebra homomorphism.  Then 
$$\Res^\Vir_{\s_2^\Vir} ( V_{\mu}^{f(t)} ) \cong \Ind^{\s_2^\Vir}_{\s_2^\Vir \cap \Vir^{f(t)}} ( \cc_{\mu} ),$$
where we regard $\cc_{\mu}$ as a $\s_2^\Vir \cap \Vir^{f(t)}$-module (via restriction). 
\end{prop}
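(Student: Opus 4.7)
The natural plan is to apply Lemma \ref{lem:ResIndCommute} with $\g=\Vir$, $\B=\Vir^f$, $\mathfrak a=\s_2^\Vir$, and $V=\cc_\mu$. Assuming the hypothesis of that lemma is in force, its conclusion reads
$$\Ind^{\s_2^\Vir}_{\s_2^\Vir \cap \Vir^f} \Res^{\Vir^f}_{\s_2^\Vir \cap \Vir^f} ( \cc_{\mu} ) \;\cong\; \Res^\Vir_{\s_2^\Vir}\Ind^\Vir_{\Vir^f} ( \cc_\mu ) \;=\; \Res^\Vir_{\s_2^\Vir} ( V^f_\mu ),$$
and since $\Res^{\Vir^f}_{\s_2^\Vir \cap \Vir^f}(\cc_\mu)$ is just the one-dimensional module $\cc_\mu$ with action inherited from the restriction of $\mu$ to $\s_2^\Vir \cap \Vir^f$, the left-hand side is exactly $\Ind^{\s_2^\Vir}_{\s_2^\Vir \cap \Vir^f}(\cc_\mu)$ as stated.

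The only thing to verify is the hypothesis of Lemma \ref{lem:ResIndCommute}, namely that $V^f_\mu$ coincides with $U(\s_2^\Vir)\otimes_{U(\Vir^f)}\cc_\mu$; equivalently, that $V^f_\mu$ is generated as a $U(\s_2^\Vir)$-module by the canonical vector $v_\mu$. This is precisely the content of Corollary \ref{cor:sl2-cyclic}: the conditions $a_0,a_k\neq 0$ and $k\in\{1,2,3\}$ ensure that $\Vir = \Vir^f + \s_2^\Vir$, and this combined with Lemma \ref{lem:subalgebraAction} lifts the obvious inclusion $\Vir \cdot v_\mu \subseteq \s_2^\Vir\cdot v_\mu$ to $U(\Vir)\cdot v_\mu \subseteq U(\s_2^\Vir)\cdot v_\mu$, forcing cyclicity.

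Given that all the delicate work has been absorbed into Corollary \ref{cor:sl2-cyclic} and Lemma \ref{lem:ResIndCommute}, there is no real obstacle here; the proof is a brief assembly of earlier results. The only subtlety worth flagging is that in the hypothesis of Lemma \ref{lem:ResIndCommute}, the expression $U(\s_2^\Vir)\otimes_{U(\Vir^f)}\cc_\mu$ should be read as $U(\s_2^\Vir)\otimes_{U(\s_2^\Vir\cap\Vir^f)}\cc_\mu$, since only the common subalgebra $\s_2^\Vir\cap\Vir^f$ acts compatibly across both tensor factors; this identification is what allows the transition from the lemma's hypothesis to its conclusion.
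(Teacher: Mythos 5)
Your proof is correct and essentially identical to the paper's: both apply Lemma \ref{lem:ResIndCommute} with $\g=\Vir$, $\B=\Vir^f$, $\mathfrak a=\s_2^\Vir$, $V=\cc_\mu$, and verify the required cyclicity hypothesis via Corollary \ref{cor:sl2-cyclic}. (Your closing parenthetical slightly misreads the hypothesis of Lemma \ref{lem:ResIndCommute}: it asserts that $\Ind_{\B}^{\g}(V)$ is generated over $U(\mathfrak a)$ by $V$, and the identification with $U(\mathfrak a)\otimes_{U(\mathfrak a\cap\B)}V$ is the content of that lemma's proof rather than a rereading of its hypothesis --- but this does not affect your argument.)
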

\begin{proof}
From Corollary \ref{cor:sl2-cyclic} and Lemma \ref{lem:ResIndCommute}, we have that 
\begin{align*}
\Res^\Vir_{\s_2^\Vir} ( V_{\mu}^{f(t)} ) &=  \Res^\Vir_{\s_2^\Vir} \Ind^\Vir_{\Vir^{f(t)}} ( \cc_{\mu} ) \\
& \cong \Ind^{\s_2^\Vir}_{\s_2^\Vir \cap \Vir^{f(t)}}  \Res^{\Vir^{f(t)}}_{\s_2^\Vir \cap \Vir^{f(t)}}  ( \cc_{\mu} ) \\
&\cong \Ind^{\s_2^\Vir}_{\s_2^\Vir \cap \Vir^{f(t)}} ( \cc_{\mu} ).
\end{align*}
\end{proof}

\subsection{Connecting $\s_2$-Verma modules and $\Vir$-modules $V^{t- \lambda}_\mu$}

In this section, we focus on connections between Verma modules for $\s_2$ and the $\Vir$-modules $V^{t- \lambda}_{\mu}$. In particular,  Proposition \ref{prop:Res1} shows that each nontrivial twist of an $\s_2$-Verma module lifts to a $\Vir$ module induced from a codimension one $\Vir$-subalgebra.

\begin{prop} \label{prop:Res1}
Let $\lambda \in \cc^*$ and  $\mu : \Vir^{t - \lambda} \to \cc$ a Lie algebra homomorphism. Then,
$$\Res_{\mathfrak{sl}_2^\Vir}^\Vir  (V^{t- \lambda}_{\mu}) \cong M\left(2 \mu ( \gamma_\lambda(e_0))\right)^{\gamma_\lambda^{-1}} = M ( 2 \lambda^{-1} \mu ( t - \lambda ) )^{\gamma_\lambda^{-1}} ,$$
where $M\left(2 \mu ( \gamma_\lambda(e_0))\right)$ is the Verma module of highest weight $2 \mu ( \gamma_\lambda(e_0))$.  In particular, $\Res_{\mathfrak{sl}_2^\Vir}^\Vir (V^{t- \lambda}_{\mu})$ is simple if and only if $2 \mu ( \gamma_\lambda(e_0)) \not\in \z_{\geq 0}$.
\end{prop}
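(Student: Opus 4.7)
The plan is to apply Proposition \ref{prop:resVirMod-sl2} with $f(t)=t-\lambda$ to convert the restriction into an induced $\s_2^\Vir$-module, and then identify that module as a twisted Verma module via Proposition \ref{prop:Vermaiso}. Since $k=1$, Corollary \ref{cor:codimIntersect} guarantees that $\s_2^\Vir \cap \Vir^{t-\lambda}$ is two-dimensional, so the central task is to pin down which two-dimensional subalgebra of $\s_2$ it corresponds to under the embedding (\ref{eqn:sl2-embed}) and to track how the character $\mu$ restricts to it.

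First I would describe the intersection explicitly. Because $\Vir^{t-\lambda} = \mbox{span}\{ z, e_i - \lambda e_{i-1} \mid i \in \z \}$ and $\s_2^\Vir = \mbox{span}\{ e_{-1}, e_0, e_1 \}$, an element $e_i - \lambda e_{i-1}$ lies in $\s_2^\Vir$ precisely when $i \in \{0,1\}$; thus $\s_2^\Vir \cap \Vir^{t-\lambda} = \mbox{span}\{ e_0 - \lambda e_{-1},\, e_1 - \lambda e_0 \}$. Under the inverse of (\ref{eqn:sl2-embed}), namely $e_0 \leftrightarrow \tfrac{1}{2}h$, $e_{-1} \leftrightarrow -f$, $e_1 \leftrightarrow e$, the element $e_0 - \lambda e_{-1}$ matches $\tfrac{1}{2}(h + 2\lambda f) = \tfrac{1}{2}\gamma_\lambda(h)$, while the combination $(e_1 - \lambda e_0) - \lambda(e_0 - \lambda e_{-1}) = e_1 - 2\lambda e_0 + \lambda^2 e_{-1}$ matches $e - \lambda h - \lambda^2 f = \gamma_\lambda(e)$. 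This identifies $\s_2^\Vir \cap \Vir^{t-\lambda}$ with $\gamma_\lambda(\mbox{span}\{h,e\}) = \gamma_\lambda(\B^+) = \B_\lambda$.

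Proposition \ref{prop:resVirMod-sl2} now yields $\Res_{\s_2^\Vir}^{\Vir}(V_\mu^{t-\lambda}) \cong \Ind_{\B_\lambda}^{\s_2}(\cc_\mu)$, and Proposition \ref{prop:Vermaiso} identifies the right-hand side with $M(\mu(\gamma_\lambda(h)))^{\gamma_\lambda^{-1}}$. Since $h = 2 e_0$ under the embedding and $\gamma_\lambda$ is linear, $\gamma_\lambda(h) = 2\gamma_\lambda(e_0)$, so $\mu(\gamma_\lambda(h)) = 2\mu(\gamma_\lambda(e_0))$, giving the stated isomorphism. For the simplicity claim, I would invoke the classical fact that the $\s_2$-Verma module $M(\delta)$ is simple if and only if $\delta \notin \z_{\geq 0}$ (via the standard singular-vector calculation $e f^n v_\delta = n(\delta - n + 1) f^{n-1} v_\delta$) and note that twisting by the automorphism $\gamma_\lambda^{-1}$ preserves simplicity.

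The main point requiring care is the subalgebra identification in the second step: correctly matching the generators of $\s_2^\Vir \cap \Vir^{t-\lambda}$ with $\s_2$-generators of $\B_\lambda$ involves tracking the factor $1/2$ coming from $h = 2e_0$ and the sign from $f = -e_{-1}$, together with the linear combination needed to replace $e_1 - \lambda e_0$ by $\gamma_\lambda(e)$. I expect this bookkeeping to be the only real obstacle, as every other step is a direct appeal to a previously established result.
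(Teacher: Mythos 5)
Your proposal is correct and follows essentially the same route as the paper: identify $\s_2^\Vir \cap \Vir^{t-\lambda}$ with $\B_\lambda$ under the embedding, then apply Propositions \ref{prop:resVirMod-sl2} and \ref{prop:Vermaiso}. Your explicit generator bookkeeping and your justification of the simplicity criterion are slightly more detailed than the paper's proof, but the substance is identical.
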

\begin{proof}
Note that $\Vir^{t - \lambda} \cap \,  \s_2^\Vir = \mbox{span} \{e_1 - \lambda e_0, e_0 - \lambda e_{-1} \}$ and $\B_\lambda= \mbox{span} \{ e-\frac{\lambda}{2} h, \frac{1}{2} h + \lambda f \}$. Therefore, $\Vir^{t - \lambda} \cap \s_2^\Vir \cong \B_\lambda$. Using the natural embedding of $\s_2$ in $\Vir$ (where $h = 2e_0$), we consider $\mu$ as a $\s_2$-homomorphism. Then from Propositions \ref{prop:resVirMod-sl2} and \ref{prop:Vermaiso}, we have 
$$\Res_{\s_2^\Vir}^\Vir  (V^{t- \lambda}_{\mu}) \cong \Ind_{\B_\lambda}^{\s_2} ( \cc_\mu ) \cong M(\mu(\gamma_\lambda( 2 e_0 )))^{\gamma_\lambda^{-1}}.$$

To see that $2 \mu ( \gamma_\lambda ( e_0 ))= 2 \mu ( t^{-1} ( t - \lambda ))$, note that by (\ref{eqn:gammalambda}) and (\ref{eqn:sl2-embed}), 
$$\mu ( \gamma_\lambda ( 2e_0 )) = \mu ( \gamma_\lambda ( h )) = \mu ( h +  2 \lambda f) = \mu ( 2 (e_0 - \lambda e_{-1} )) = 2 \mu ( t^{-1} ( t - \lambda )).$$
Since $\mu ( t^i ( t - \lambda )) = \lambda^i \mu ( t - \lambda )$, it follows that $\mu ( \gamma_\lambda ( 2e_0 )) = 2 \lambda^{-1} \mu ( t - \lambda )$.
\end{proof}

\begin{cor}
Let $\lambda \in \cc^*$ and $\mu : \Vir^{t - \lambda} \to \cc$ be a Lie algebra homomorphism.  Then the Casimir element of $\s_2^\Vir$ is $c=-4e_{-1}e_1 +(2e_0+1)^2 \in U( \s_2^\Vir ) \subseteq U ( \Vir )$, and $c$ acts by the scalar $(2 \mu(\gamma_\lambda(e_0)) + 1)^2$ on the $\Vir$-module $V_{\mu}^{t - \lambda}$.
\end{cor}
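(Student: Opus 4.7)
The plan is to leverage Proposition \ref{prop:Res1}, which identifies the $\s_2$-module structure on $V_{\mu}^{t-\lambda}$ as the twisted Verma module $M(2\mu(\gamma_\lambda(e_0)))^{\gamma_\lambda^{-1}}$, and then compute the action of the Casimir on this twisted module. The first bookkeeping step is to confirm the claimed expression for $c$: under the embedding (\ref{eqn:sl2-embed}), the element $4fe + (h+1)^2$ of $U(\s_2)$ maps to $4(-e_{-1})(e_1) + (2e_0+1)^2 = -4e_{-1}e_1 + (2e_0+1)^2$, which is the form stated in the corollary.

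Next I would record the standard fact that on the Verma module $M(\delta)$ of highest weight $\delta$, the Casimir acts by the scalar $(\delta+1)^2$: applying $c = 4fe + (h+1)^2$ to a highest weight vector $v$ annihilates the first summand (since $ev = 0$) and leaves $(\delta+1)^2 v$ from the second; centrality of $c$ propagates this scalar to all of $M(\delta)$.

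The only real step is to handle the twist. On $M(\delta)^{\gamma_\lambda^{-1}}$, an element $y \in U(\s_2)$ acts via $\gamma_\lambda^{-1}(y)$, so the Casimir acts as $\gamma_\lambda^{-1}(c)$. I would argue that $\gamma_\lambda(c) = c$, and hence $\gamma_\lambda^{-1}(c) = c$. This can be justified abstractly (the Casimir of a semisimple Lie algebra is defined via the invariant Killing form and is therefore fixed by every Lie-algebra automorphism), or one can verify it by a direct computation using (\ref{eqn:gammalambda}): expanding $\gamma_\lambda(4ef) + \gamma_\lambda((h-1)^2)$ with the identity $fh = hf + 2f$ causes the $\lambda$- and $\lambda^2$-contributions to cancel, leaving $4ef + (h-1)^2 = c$. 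The main (and only) conceptual obstacle is making sure this invariance is in place; once it is, the twist is harmless.

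Combining these steps, $c$ acts on $V_{\mu}^{t-\lambda} \cong M(2\mu(\gamma_\lambda(e_0)))^{\gamma_\lambda^{-1}}$ by the same scalar as on the untwisted Verma module of highest weight $\delta = 2\mu(\gamma_\lambda(e_0))$, namely $(\delta+1)^2 = (2\mu(\gamma_\lambda(e_0))+1)^2$, which is exactly the assertion of the corollary.
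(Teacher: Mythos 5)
Your proposal is correct and follows essentially the same route as the paper: identify $c$ via the embedding, use that $c$ acts on $M(\delta)$ by $(\delta+1)^2$, observe that $\gamma_\lambda(c)=c$ so the twist does not change the scalar, and conclude via Proposition \ref{prop:Res1}. The only cosmetic difference is that you justify $\gamma_\lambda(c)=c$ by invariance of the Casimir under automorphisms, where the paper simply cites a direct computation.
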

\begin{proof}
It follows directly from the standard embedding of $\s_2$ in $\Vir$ that $c=-4e_{-1}e_1 +(2e_0+1)^2$. Since $c$ acts on the Verma module $M\left(2 \mu ( \gamma_\lambda(e_0))\right)$ by $(2 \mu ( \gamma_\lambda(e_0)) + 1)^2$, it is clear that $\gamma_\lambda (c)$ acts on $M\left(2\mu ( \gamma_\lambda(e_0))\right)^{\gamma_\lambda^{-1}}$ by $(2 \mu ( \gamma_\lambda(e_0)) + 1)^2$.  It is straightforward to compute that $\gamma_\lambda (c) = c$, so the result follows from Proposition \ref{prop:Res1}. 
\end{proof}

Note that Corollary \ref{cor:codimIntersect} and Proposition \ref{prop:Res1} hold for the polynomial $f(t) = t - \lambda$ only if $\lambda \neq 0$. In \cite{ondWies18}, we have shown that the subalgebras $\Vir^{t- \lambda} \subseteq \Vir$ for $\lambda \neq 0$ are all the codimension-one subalgebras of $\Vir$. Thus, Corollary \ref{cor:codimIntersect} gives a one-to-one correspondence between codimension-one subalgebras of $\Vir$ and codimension one subalgebras of $\s_2$ excluding $\B^\pm$; and Proposition \ref{prop:Res1} shows that this correspondence carries over to induced modules.

We note that generic $\Vir$-Verma modules are simple yet do not restrict to simple $\s_2$-modules.  In particular, the dimensions of weight spaces of a Verma module for $\Vir$ are counted by partitions, whereas the weight spaces of highest weight $\s_2$-modules are necessarily one-dimensional.   Hence the restriction of simple $\Vir$ Verma modules $M ( \zeta , \lambda )$ to $\s_2^\Vir$ is not a simple highest weight $\s_2^\Vir$-module.

\subsection{Connecting $W(\eta)$ and $X(\xi)$ to $V^{(t-\lambda_1)(t-\lambda_2)}_\mu$}

Below we consider the $\Vir$-module $V^{(t-\lambda_1)(t-\lambda_2)}_{\mu}$ for $\lambda_1, \lambda_2 \in \cc^*$ and examine the restriction of this module to $\s_2^\Vir$.  In particular,  Proposition \ref{prop:Res1} shows that each nontrivial twist of the modules  $W(\eta)$ and $X(\xi)$ lifts to a $\Vir$ module induced from a codimension $2$ $\Vir$-subalgebra.

\begin{prop} \label{prop:Res2}
Choose $\lambda_1, \lambda_2 \in \cc^*$ and $\mu: \Vir^{(t-\lambda_1)(t-\lambda_2)} \rightarrow \cc$ a nonzero Lie algebra homomorphism.   \begin{itemize}
\item[(i)] If $\lambda_1=\lambda_2:=\lambda$, then
$$\Res_{\mathfrak{sl}_2^\Vir}^{\Vir} (V^{(t-\lambda)^2}_{\mu}) \cong W\left(\mu(\gamma_\lambda(e_1))\right)^{\gamma_\lambda^{-1}}.$$

\item[(ii)] If $\lambda_1 \neq \lambda_2$, then
$$\Res_{\mathfrak{sl}_2^\Vir}^{\Vir} (V^{(t-\lambda_1)(t-\lambda_2)}_{\mu})\cong X \left( 2\mu(\gamma_{\lambda_1, \lambda_2}(e_0)) \right)^{(\gamma_{\lambda_1, \lambda_2})^{-1}}.$$
\end{itemize}
\end{prop}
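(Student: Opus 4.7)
My plan is to mirror exactly the structure of the proof of Proposition \ref{prop:Res1}, but now applied to the degree-two polynomial $f = (t-\lambda_1)(t-\lambda_2)$. The key identity is Proposition \ref{prop:resVirMod-sl2}, which gives
$$\Res_{\mathfrak{sl}_2^\Vir}^\Vir (V^{f}_\mu) \cong \Ind^{\s_2^\Vir}_{\s_2^\Vir \cap \Vir^f}(\cc_\mu),$$
so everything reduces to (a) identifying the one-dimensional subalgebra $\s_2^\Vir \cap \Vir^f$ as one of the standard one-dimensional subalgebras from Lemma \ref{lem:onedim}, and then (b) invoking Proposition \ref{prop:twist} to rewrite the induced module as a twist of $W(\eta)$ or $X(\xi)$.

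For step (a), I recall the description
$$\Vir^{(t-\lambda_1)(t-\lambda_2)} = \mbox{span}\{z,\; e_i - (\lambda_1+\lambda_2)e_{i-1} + \lambda_1\lambda_2 e_{i-2} \mid i \in \z\}.$$
Intersecting with $\s_2^\Vir = \mbox{span}\{e_{-1},e_0,e_1\}$ forces $i=1$, yielding the single spanning vector
$$w := e_1 - (\lambda_1+\lambda_2)e_0 + \lambda_1\lambda_2 e_{-1}.$$
(This is consistent with the dimension count in Corollary \ref{cor:codimIntersect}, which predicts a one-dimensional intersection.) Under the embedding $h = 2e_0$, $e = e_1$, $f = -e_{-1}$, this becomes $w = e - \tfrac{\lambda_1+\lambda_2}{2}h - \lambda_1\lambda_2 f$ inside $\s_2$.

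For step (b), the two cases split according to the shape of $w$. When $\lambda_1 = \lambda_2 = \lambda$, we recognize $w = e - \lambda h - \lambda^2 f = \gamma_\lambda(e)$ directly from \eqref{eqn:gammalambda}, so $\s_2^\Vir \cap \Vir^{(t-\lambda)^2} = \mathfrak{n}_\lambda$. Proposition \ref{prop:twist}(ii) then gives $\Ind_{\mathfrak{n}_\lambda}^{\s_2}(\cc_\mu) \cong W(\mu(\gamma_\lambda(e)))^{\gamma_\lambda^{-1}}$, and translating via $e \mapsto e_1$ yields part (i). When $\lambda_1 \neq \lambda_2$, a short calculation from \eqref{eqn:tau} shows
$$\gamma_{\lambda_1,\lambda_2}(h) = \frac{-2}{\lambda_2-\lambda_1}\,w,$$
so $\s_2^\Vir \cap \Vir^{(t-\lambda_1)(t-\lambda_2)} = \cc\,\gamma_{\lambda_1,\lambda_2}(h) = \mathfrak{h}_{\lambda_1,\lambda_2}$. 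Proposition \ref{prop:twist}(i) then gives $\Ind_{\mathfrak{h}_{\lambda_1,\lambda_2}}^{\s_2}(\cc_\mu) \cong X(\mu(\gamma_{\lambda_1,\lambda_2}(h)))^{(\gamma_{\lambda_1,\lambda_2})^{-1}}$, and $\mu(\gamma_{\lambda_1,\lambda_2}(h)) = 2\mu(\gamma_{\lambda_1,\lambda_2}(e_0))$ under the embedding, producing the formula in part (ii).

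None of the steps should pose a serious obstacle; the only place where care is needed is in verifying that $\cc\, w$ really is the full intersection (rather than being a proper subspace of something larger), which is secured by Corollary \ref{cor:codimIntersect} giving the exact dimension $3 - \deg f = 1$. The algebraic bookkeeping in case (ii), namely matching the scalar multiple of $w$ with $\gamma_{\lambda_1,\lambda_2}(h)$, is the only computation requiring attention, but it is essentially a one-line check once the formulas \eqref{eqn:tau} are in hand.
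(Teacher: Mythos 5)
Your proposal is correct and follows essentially the same route as the paper: reduce via Proposition \ref{prop:resVirMod-sl2} to the induced module over $\s_2^\Vir \cap \Vir^f$, identify that intersection as $\mathfrak{n}_\lambda$ (when $\lambda_1=\lambda_2$) or $\mathfrak{h}_{\lambda_1,\lambda_2}$ (when $\lambda_1\neq\lambda_2$), and conclude with Proposition \ref{prop:twist}. Your write-up actually supplies more detail than the paper's proof, in particular the explicit check that $w$ is a scalar multiple of $\gamma_{\lambda_1,\lambda_2}(h)$ and the dimension count justifying that $\cc\,w$ is the whole intersection.
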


\begin{proof}
If $\lambda_1=\lambda_2:=\lambda$, then 
$$\s_2^\Vir \cap \Vir^{(t-\lambda)^2} =\mbox{span} \{ e_1-2 \lambda e_0 + \lambda^2 e_{-1} \} \cong \mathfrak{n}_\lambda.$$ If $\lambda_1 \neq \lambda_2$, then 
$$\s_2^\Vir \cap \Vir^{(t-\lambda_1)(t-\lambda_2)} =\mbox{span} \{ e_1 - (\lambda_1 + \lambda_2) e_0 +\lambda_1 \lambda_2 e_{-1} \} \cong \mathfrak{h}_{\lambda_1, \lambda_2}.$$
Both results then follow from Propositions \ref{prop:twist} and \ref{prop:resVirMod-sl2}.
\end{proof}

As in Proposition \ref{prop:Res1}, we note that Proposition \ref{prop:Res2} holds only if $\lambda_1, \lambda_2 \neq 0$. In particular, this result, in conjunction with Proposition \ref{prop:quotientXmu}, shows that for a suitable choice of $\lambda_1, \lambda_2 \in \cc^*$, there is a surjective homomomorphism from $\Res_{\mathfrak{sl}_2^\Vir}^{\Vir} (V^{(t-\lambda_1)(t-\lambda_2)}_{\mu})$ to a (nontrivial) twist of one of the simple $\s_2$-modules ${\bf V} (\overline \mu_0 , \tau)$, where $\overline \mu_0$ denotes the coset $2\mu(\gamma_{\lambda_1, \lambda_2}(e_0)) + 2 \z$.

Martin and Prieto \cite{marPrie} showed that  the untwisted $\s_2$-modules $V(\overline \mu, \tau)$ can be lifted to intermediate series modules for $\Vir$. Intermediate series modules are weight modules and are, in general, simple.  Thus, there are no (non-trivial) homomorphisms between intermediate series modules and polynomial modules. Moreover, it can be shown that the twist of any polynomial $\Vir$-module is again a polynomial module. Thus, there appears to be no connection between intermediate series modules and polynomial modules, yet these modules are closely connected when restricted to $\s_2$.

As an application of the above results on restriction, we can now show that the tensor product of two twisted $\s_2$ Verma modules (or equivalently, the tensor product of two Verma modules with respect to different Borel subalgebras) is a twist of one of the modules $X(\xi)$.

\begin{cor}\label{cor:tensorVermas}
Let $\lambda_1, \lambda_2 \in \cc^*$, $\lambda_1 \neq \lambda_2$, and $\tau_1, \tau_2 \in \cc$. Then
$$M(\tau_1)^{\gamma_{\lambda_1}^{-1}} \otimes M(\tau_2)^{\gamma_{\lambda_2}^{-1}} \cong X( \tau_1 - \tau_2)^{(\gamma_{\lambda_1, \lambda_2})^{-1}}.$$
\end{cor}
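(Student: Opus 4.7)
My plan is to construct an explicit $\s_2$-module homomorphism
\[
\Phi: X(\mu_1-\mu_2)^{(\gamma_{\lambda_1,\lambda_2})^{-1}} \longrightarrow T := M(\mu_1)^{\gamma_{\lambda_1}^{-1}} \otimes M(\mu_2)^{\gamma_{\lambda_2}^{-1}}
\]
that sends the canonical generator $x_{\mu_1-\mu_2}$ to $v_1 \otimes v_2$, where $v_i \in M(\mu_i)$ is the standard highest weight vector, and then to prove it is bijective via PBW and a filtration argument. By the twist definition, $\gamma_{\lambda_i}(h) v_i = \mu_i v_i$ and $\gamma_{\lambda_i}(e) v_i = 0$ inside $M(\mu_i)^{\gamma_{\lambda_i}^{-1}}$. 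Routine manipulation of \eqref{eqn:gammalambda} and \eqref{eqn:tau} yields the decompositions
\[
\gamma_{\lambda_1,\lambda_2}(h) \;=\; \gamma_{\lambda_1}(h) - \tfrac{2}{\lambda_2-\lambda_1}\gamma_{\lambda_1}(e) \;=\; -\gamma_{\lambda_2}(h) - \tfrac{2}{\lambda_2-\lambda_1}\gamma_{\lambda_2}(e),
\]
from which $\gamma_{\lambda_1,\lambda_2}(h)\cdot(v_1\otimes v_2) = (\mu_1 - \mu_2)(v_1\otimes v_2)$ is immediate. Proposition \ref{prop:twist}(i) and the universal property of induction then produce the desired $\s_2$-homomorphism $\Phi$.

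For bijectivity, a direct bracket computation yields $[\gamma_{\lambda_1}(e),\gamma_{\lambda_2}(e)] = -(\lambda_1-\lambda_2)^2\,\gamma_{\lambda_1,\lambda_2}(h)$, and the triple $\{\gamma_{\lambda_1}(e), \gamma_{\lambda_2}(e), \gamma_{\lambda_1,\lambda_2}(h)\}$ is linearly independent in $\s_2$; hence PBW furnishes the vector-space basis $\{\gamma_{\lambda_1}(e)^a\gamma_{\lambda_2}(e)^b \cdot x_{\mu_1-\mu_2}\}_{a,b\ge 0}$ of $X(\mu_1-\mu_2)^{(\gamma_{\lambda_1,\lambda_2})^{-1}}$. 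Filter $T$ by total $f$-degree via $F^{(n)}T = \mathrm{span}\{f^i v_1 \otimes f^j v_2 : i+j \le n\}$. Using $\gamma_{\lambda_1}^{-1}(\gamma_{\lambda_2}(e)) = e + (\lambda_1-\lambda_2)h - (\lambda_1-\lambda_2)^2 f$ (and its analogue for $\gamma_{\lambda_2}^{-1}(\gamma_{\lambda_1}(e))$), together with the standard Verma relations $e f^k v = k(\mu-k+1)f^{k-1}v$ and $h f^k v = (\mu-2k)f^k v$, a careful calculation in each factor shows that modulo $F^{(i+j)}T$,
\[
\gamma_{\lambda_1}(e)\cdot(f^i v_1 \otimes f^j v_2) \equiv -(\lambda_1-\lambda_2)^2 (f^i v_1 \otimes f^{j+1} v_2),
\]
and symmetrically $\gamma_{\lambda_2}(e)\cdot(f^i v_1 \otimes f^j v_2) \equiv -(\lambda_1-\lambda_2)^2 (f^{i+1} v_1 \otimes f^j v_2)$. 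Induction on $a+b$ then gives
\[
\gamma_{\lambda_1}(e)^a\gamma_{\lambda_2}(e)^b\cdot(v_1\otimes v_2) \;=\; \bigl(-(\lambda_1-\lambda_2)^2\bigr)^{a+b}(f^b v_1 \otimes f^a v_2) + (\text{terms in } F^{(a+b-1)}T).
\]
Because $\lambda_1 \neq \lambda_2$, this is a triangular change of basis with nonzero diagonal from the PBW basis of $X(\mu_1-\mu_2)^{(\gamma_{\lambda_1,\lambda_2})^{-1}}$ to the standard basis $\{f^b v_1 \otimes f^a v_2\}_{a,b\ge 0}$ of $T$, so $\Phi$ is bijective.

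The main obstacle is the triangularity step: establishing the two congruences for $\gamma_{\lambda_i}(e)$ modulo lower filtration requires expanding the twisted actions $\gamma_{\lambda_1}^{-1}(\gamma_{\lambda_2}(e))$ and $\gamma_{\lambda_2}^{-1}(\gamma_{\lambda_1}(e))$ explicitly and tracking the unique $f$-degree-raising contribution in each factor. The eigenvalue identity for $\gamma_{\lambda_1,\lambda_2}(h)$, the existence of $\Phi$ via Proposition \ref{prop:twist}, and the PBW decomposition are all formal once these computations are in hand.
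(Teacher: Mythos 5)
Your proof is correct, but it takes a genuinely different route from the paper's. The paper deduces the corollary from its Virasoro machinery: it identifies each twisted Verma module as $\Res^{\Vir}_{\s_2^{\Vir}}(V^{t-\lambda_i}_{\tilde\mu_i})$ via Proposition \ref{prop:Res1}, invokes Corollary 6.6 of \cite{ondWies18} to rewrite the tensor product of the two polynomial $\Vir$-modules as the single module $V^{(t-\lambda_1)(t-\lambda_2)}_{\tilde\mu_1+\tilde\mu_2}$, and then restricts again via Proposition \ref{prop:Res2}; the only computation left is matching the parameter $2(\tilde\mu_1+\tilde\mu_2)(\gamma_{\lambda_1,\lambda_2}(e_0))=\mu_1-\mu_2$. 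Your argument never leaves $\s_2$: you obtain the map by Frobenius reciprocity from the eigenvalue identity $\gamma_{\lambda_1,\lambda_2}(h)\cdot(v_1\otimes v_2)=(\mu_1-\mu_2)(v_1\otimes v_2)$ (your two decompositions of $\gamma_{\lambda_1,\lambda_2}(h)$ check out, as do the bracket $[\gamma_{\lambda_1}(e),\gamma_{\lambda_2}(e)]=-(\lambda_1-\lambda_2)^2\gamma_{\lambda_1,\lambda_2}(h)$ and the twisted actions $\gamma_{\lambda_j}^{-1}(\gamma_{\lambda_i}(e))=e+(\lambda_j-\lambda_i)h-(\lambda_i-\lambda_j)^2f$), and you prove bijectivity by triangularity against the filtration of the tensor product by total $f$-degree; the leading coefficient $\bigl(-(\lambda_1-\lambda_2)^2\bigr)^{a+b}$ is nonzero exactly because $\lambda_1\neq\lambda_2$, so the argument closes. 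What the paper's route buys is brevity and thematic coherence: the corollary is presented precisely as an application of the restriction results and of the tensor-product formula for polynomial $\Vir$-modules. What yours buys is an explicit isomorphism with an explicit basis correspondence $\gamma_{\lambda_1}(e)^a\gamma_{\lambda_2}(e)^b\,x_{\mu_1-\mu_2}\mapsto\bigl(-(\lambda_1-\lambda_2)^2\bigr)^{a+b}f^bv_1\otimes f^av_2+(\text{lower order})$, independence from the external reference, and slightly greater generality: you use only $\lambda_1\neq\lambda_2$ rather than $\lambda_1,\lambda_2\in\cc^*$, so your argument also covers the case where one $\lambda_i$ vanishes (an untwisted Verma factor), which the Virasoro route cannot reach since $\Vir^{t-\lambda}$ is a proper subalgebra only for $\lambda\neq 0$.
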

\begin{proof}
From Proposition \ref{prop:Res1}, we have $M(\tau_i)^{\gamma_{\lambda_i}^{-1}} \cong \Res^{\Vir}_{\s_2^\Vir} (V^{(t-\lambda_i)}_{\mu_i})$, where $\mu_i: \Vir^{(t-\lambda_i)} \rightarrow \cc$ is a Lie algebra homomorphism such that $\tau_i =\mu_i ( \gamma_{\lambda_i} ( 2e_0) ) = \mu_i ( 2 t^{-1} ( t - \lambda_i ))$.

Corollary 6.6 of \cite{ondWies18} gives that
$V_{{\mu_1}}^{t - \lambda_1}  \otimes V_{{\mu_2}}^{t - \lambda_2} 
\cong  V_{\mu_1 + \mu_2}^{(t - \lambda_1)(t - \lambda_2)}$, and it follows that
$$
\Res^{\Vir}_{\s_2^\Vir} (  V_{{\mu_1}}^{t - \lambda_1} ) \otimes \Res^{\Vir}_{\s_2^\Vir}( V_{{\mu_2}}^{t - \lambda_2})
\cong \Res^{\Vir}_{\s_2^\Vir} \left( V_{\mu_1 +  \mu_2}^{(t - \lambda_1)(t - \lambda_2)}  \right).
$$
Using Proposition \ref{prop:Res2}, we know that 
$$ \Res^{\Vir}_{\s_2^\Vir} \left( V_{ \mu_1 +\mu_2}^{(t - \lambda_1)(t - \lambda_2)}  \right) \cong X \left(2 ( \mu_1 +  \mu_2)(\gamma_{\lambda_1, \lambda_2}(e_0))\right)^{(\gamma_{\lambda_1, \lambda_2})^{-1}}.$$
Note that
\begin{align*}
\gamma_{\lambda_1, \lambda_2}(e_0) &= \frac{1}{\lambda_2 -\lambda_1}(-e_1+(\lambda_1+ \lambda_2) e_0 - \lambda_1 \lambda_2 e_{-1}) \\
&= \frac{1}{\lambda_2 -\lambda_1} \left( -(e_1-\lambda_1 e_0) + \lambda_2 (e_0-\lambda_1 e_{-1}) \right)\\
&=  \frac{1}{\lambda_2 -\lambda_1} \left( -(e_1-\lambda_2 e_0) + \lambda_1 (e_0-\lambda_2 e_{-1}) \right).
\end{align*}
Therefore,
\begin{align*}
 \mu_1(\gamma_{\lambda_1, \lambda_2}(e_0)) &= \frac{1}{2 (\lambda_2-\lambda_1)} (-\lambda_1 \tau_1 + \lambda_2 \tau_1) = \frac{\tau_1}{2};\\
\mu_2(\gamma_{\lambda_1, \lambda_2}(e_0)) &= \frac{1}{2(\lambda_2-\lambda_1) } (-\lambda_2 \tau_2 + \lambda_1 \tau_2) = \frac{-\tau_2}{2}.
\end{align*}
Consequently $2 ( \mu_1 +  \mu_2)(\gamma_{\lambda_1, \lambda_2}(e_0)) = \tau_1 - \tau_2$, and the result follows.
\end{proof}

\subsection{The trivial subalgebra of $\s_2$ and related $\Vir$-modules}

Consider the case $f(t) = a_0 + a_1 t + a_2 t^2 + t^3$ with $a_i \in \cc$ and $a_0 \neq 0$.  In this case $\s_2^\Vir / ( \s_2^\Vir \cap \Vir^{f(t)} )$ is three-dimensional, so $\s_2^\Vir \cap \Vir^{f(t)} = \{ 0 \}$.

As a result, we see that if $V_{ \mu}^{f(t)}$ and $\mu : \Vir^{f(t)} \to \cc$ are defined in the usual way, then Proposition \ref{prop:resVirMod-sl2} implies that 
$$\Res_{\mathfrak{sl}_2^\Vir}^\Vir (V^{f(t)}_{\mu}) \cong \Ind^{\mathfrak{sl}_2^\Vir}_{\{ 0 \}}  \cc_{\mu},$$
where we regard $\cc_{\mu}$ as a one-dimensional module for the trivial Lie algebra $\{ 0 \}$.  Notice that 
$$\Ind_{\{ 0 \}}^{\s_2^\Vir} \cc_{\mu} = U( \s_2^\Vir ) \otimes_{U( \{ 0 \} )} \cc_{\mu}.$$
Since $U( \{ 0 \} ) = \cc$, it follows that $\Ind_{\{ 0 \}}^{\s_2^\Vir} \cc_{\mu} = U( \s_2^\Vir ) \otimes_\cc \cc_{\mu} \cong U( \s_2^\Vir )$, and thus
$$\Res_{\mathfrak{sl}_2^\Vir}^\Vir (V^{f(t)}_{ \mu}) \cong U( \s_2^\Vir ).$$

\subsection{Lifting $\s_2$-modules to polynomial $\Vir$-modules} \label{sec:UniqueLift}
So far, we have taken the perspective of restricting polynomial $\Vir$-modules to $\s_2$-modules. If we reverse that question, to think of the prior results in terms of lifting $\s_2$-modules to $\Vir$-modules, it is natural to ask which $\s_2$-modules can be lifted and in how many ways. Here, we reframe the prior results of the paper to address these questions.

The previous section shows that $U(\s_2)$, viewed as an $\s_2$-module, can be lifted to a $\Vir$-module in infinitely many ways, once for each choice of polynomial $f(t) = a_0 + a_1 t + a_2 t^2 + t^3$ and homomorphism $\mu : \Vir^{f(t)} \to \cc$.   We now take up the other $\s_2$-modules considered in the paper.  We begin by studying possible isomorphisms between modules of the form $M\left( \tau \right)^{\gamma_{\lambda}^{-1}}$, $W \left( \nu \right)^{\gamma_{\lambda}^{-1}}$, and $X \left( \xi  \right)^{(\gamma_{\lambda_1, \lambda_2})^{-1}}$. The lemma below establishes a key computational fact.

\begin{lem}\label{lem:eigenOperators}
Let $\tau , \nu , \xi \in \cc$, and consider $y \in \s_2$.  
\begin{itemize}
\item[(i)] Suppose $0 \neq v \in M(\tau)$ such that $yv=0$. Then $y \in \cc h \oplus \cc e$.
\item[(ii)] Suppose $0 \neq v \in W(\nu)$ such that $yv=\nu'v$ for some $\nu' \in \cc$. Then $y \in \cc e$.
\item[(iii)] Suppose $0 \neq v \in X(\xi)$ such that $yv=\xi'v$ for some $\xi' \in \cc$. Then $y \in \cc h$.
\end{itemize}
\end{lem}

\begin{proof}
 In each case, suppose $y= \alpha_1e+\alpha_2h+\alpha_3f$ for some $\alpha_i \in \cc$. The lemma statements follow from a consideration of an appropriate basis for each module. 

For example, the Verma module $M(\tau)$ has a basis $\{ f^i v^+ \mid i \ge 0 \}$, where $e v^+ = 0$ and $h v^+ = \tau v^+$. Therefore, we can write $v = \sum_{i=0}^n a_i f^i v^+$ with $a_i \in \cc$ and $a_n \neq 0$.  Then it's clear that $yv$ has a nonzero coefficient on the $f^{n+1} v^+$ term unless $\alpha_3=0$. This proves (i).

For (ii), we can use the $W(\nu)$-basis $\{f^i h^j w_{\nu} \mid i, j \geq 0\}$ (where $w_\nu$ is the unique Whittaker vector, up to scalar multiple, that generates $W(\nu)$) to argue $\alpha_3=0$.  Now with $y = \alpha_1 e + \alpha_2 h$, we can use the same argument applied to the reordered basis $\{h^i f^j w_{\nu} \mid i, j \geq 0\}$ to argue that $\alpha_2=0$.

Similar arguments can be used to prove (iii), using the fact that both $\{f^i e^j x_{\nu} \mid i, j \geq 0\}$ and $\{e^i f^j x_{\nu} \mid i, j \geq 0\}$ are bases for $X(\xi)$.
\end{proof}

Proposition \ref{prop:noIsomorphisms} below shows, for example, that $M(\tau)^{\gamma_{\lambda}^{-1}} \not\cong W(\nu)^{\gamma_{\eta}^{-1}}$; and that $W(\nu_1 )^{\gamma_{\lambda_1}^{-1}} \cong W(\nu_2 )^{\gamma_{\lambda_2}^{-1}}$ implies that $\nu_1 = \nu_2$ and $\lambda_1 = \lambda_2$.  This severely limits the possibilities for distinct polynomial $\Vir$-modules that restrict to isomorphic $\s_2$-modules.

\begin{prop}\label{prop:noIsomorphisms}
Let $V_1$ and $V_2$ be $\s_2$-modules of the form $M(\tau)^{\gamma_\lambda^{-1}}$ or $W(\nu)^{\gamma_\eta^{-1}}$ or $X(\xi)^{\gamma_{\lambda_1, \lambda_2}^{-1}}$ for allowable choices of $\tau, \nu, \xi, \lambda , \eta , \lambda_1, \lambda_2$.  Then $V_1 \cong V_2$ only if $V_1=V_2$ (i.e. $V_1$ and $V_2$ are identically constructed as twists of induced modules).
\end{prop}
\begin{proof}
The proof involves a case-by-case analysis, using Lemma \ref{lem:eigenOperators} and focusing on elements of the form $( \gamma^{-1} \circ \gamma' )(y)$ for particular choices of $y \in \s_2$ and $\s_2$-automorphisms $\gamma$ and $\gamma'$.  We present the argument for two cases below.  The remaining cases are similar.

To exemplify the argument that there are no isomorphisms between modules of different types, we will show $M(\tau)^{\gamma_\lambda^{-1}} \not\cong X(\xi)^{\gamma_{\lambda_1, \lambda_2}^{-1}}$, or equivalently that $M( \tau ) \not\cong X(\xi)^{\gamma_{\lambda_1, \lambda_2}^{-1} \circ \gamma_\lambda}$. Using Lemma \ref{lem:eigenOperators},  it's enough to check that $( \gamma_{\lambda_1, \lambda_2}^{-1} \circ \gamma_\lambda ) (h) \not\in \cc h$.  It can be shown that 
$$( \gamma_{\lambda_1, \lambda_2}^{-1} \circ \gamma_\lambda )(h)= \frac{1}{\lambda_2-\lambda_1} \left((\lambda_1+\lambda_2-2\lambda) h -2(\lambda_1-\lambda)f + 2(\lambda_2-\lambda)e\right)$$
Since $\lambda_1 \neq \lambda_2$, we must have that at least one of $\lambda_1-\lambda, \lambda_2-\lambda$ is nonzero. This shows that $( \gamma_{\lambda_1, \lambda_2}^{-1} \circ \gamma_\lambda )(h) \not\in \cc h$.

To exemplify the argument addressing isomorphisms between modules of the same type, suppose $W \left( \nu_1 \right)^{\gamma_{\lambda_1}^{-1}} \cong W \left( \nu_2 \right)^{\gamma_{\lambda_2}^{-1}}$. Then $W ( \nu_1 ) \cong W ( \nu_2 )^\gamma$, where $\gamma = \gamma_{\lambda_2}^{-1} \circ \gamma_{\lambda_1} = \gamma_{\lambda_1 - \lambda_2}$. It follows that there exists a nonzero $v \in W( \nu_2 )$ such that $\gamma ( e) v = \nu_1 v$; and thus Lemma \ref{lem:eigenOperators} implies that $\gamma (e) \in \cc e$.  Since $\gamma ( e) = e - ( \lambda_1 - \lambda_2) h - ( \lambda_1 - \lambda_2)^2 f$, it must be that $\lambda_1 = \lambda_2$, and thus $W ( \nu_1 ) \cong W( \nu_2 )$.  This implies that $\nu_1 = \nu_2$, either by direct computation or by appealing to general results on block decomposition in Section 2.3 of \cite{BatMaz}.
\end{proof}

Consider the $\Vir$-modules  $V^{f_1(t)}_{\mu_1}$ and $V^{f_2(t)}_{\mu_2}$, where $f_i(t)$ is a polynomial of the form $t-\lambda$, $(t-\lambda)^2$ or $(t-\lambda_1)(t-\lambda_2)$ for some $\lambda, \lambda_i \in \cc^*$.  From Proposition 5.12 of \cite{ondWies18}, we know that if $V^{f_1(t)}_{\mu_1} \cong V^{f_2(t)}_{\mu_2}$, then $f_1(t)=f_2(t)$ and $\mu_1= \mu_2$.  Below we build on Proposition \ref{prop:noIsomorphisms} to show that if $\Res_{\mathfrak{sl}_2^\Vir}^\Vir  (V^{f_1 (t)}_{\mu_1}) \cong \Res_{\mathfrak{sl}_2^\Vir}^\Vir  (V^{f_2 (t)}_{\mu_2})$, then $f_1(t)=f_2(t)$ and $\mu_1$ and $\mu_2$ are closely connected.


\begin{lem}\label{lem:muOptions}
Let $\lambda, \lambda_1, \lambda_2 \in \cc^*$ with $\lambda_1 \neq \lambda_2$. 
\begin{itemize}
\item[(i)] For a fixed $\tau \in \cc$, there is a unique Lie algebra homomorphism $\mu: \Vir^{(t-\lambda)} \rightarrow \cc$ such that $\mu(\gamma_{\lambda}(2e_0))=\tau$.
\item[(ii)] For a fixed $\nu \in \cc$, there is a one-to-one correspondence between $\{ \mu: \Vir^{(t-\lambda)^2} \rightarrow \cc \mid \mu(\gamma_{\lambda}(e_1))=\nu \}$ and $\cc$.
\item[(iii)] For a fixed $\xi \in \cc$, 
 there is a one-to-one correspondence between \\$\{ \mu: \Vir^{(t-\lambda_1)(t-\lambda_2)} \rightarrow \cc \mid \mu(\gamma_{\lambda_1 , \lambda_2} (2e_0))=\xi \}$ and $\cc$.
\end{itemize}
\end{lem}

\begin{proof}
As shown in Lemma 3.2 of \cite{ondWies18}, if $f(t) = \prod_{i} ( t - \lambda_i )^{n_i}$, then a linear map $\mu: \Vir^{f(t)} \rightarrow \cc$ is a Lie algebra homomorphism precisely when both $\mu(z)=0$ and there exist polynomials $p_1, \ldots,p_k$ with $\deg(p_i) < n_i$ such that $\mu(t^j f) =p_1(j)\lambda_1^j+ \cdots + p_k(j) \lambda_k^j$.

For the first case, suppose $f(t)=t-\lambda$. Then $\mu(t^i f(t))= p \lambda^i$ for some constant $p \in \cc$. It's clear that there is only one choice of $p$ such that $\tau = \mu (\gamma_{\lambda}(2e_0))=2 \lambda^{-1} \mu(t-\lambda)=2 \lambda^{-1} p$, namely $p = \frac{\tau \lambda}{2}$.

For the second case, suppose $f(t)=(t-\lambda)^2$. Then $\mu(t^j f)= (p_0 +p_1 j)\lambda^j$ for some constants $p_0 , p_1 \in \cc$. In particular, $\nu= \mu (\gamma_{\lambda} (e_1))= \mu (t^{-1}(t-\lambda)^2)=\frac{p_0 - p_1}{\lambda}$.  It follows that $\mu ( \gamma_\lambda (e_1) ) = \nu$ if and only if $p_0 = \nu \lambda + p_1$.  In other words, such a $\mu$ is completely determined by $p_1 \in \cc$ and has the form $\mu(t^j f(t))= ( \nu \lambda + p_1 + p_1 j ) \lambda^j$.

For the third case, suppose $f(t)=(t-\lambda_1)(t-\lambda_2)$.  Then $\mu(t^j f(t))= p_1\lambda_1^j+p_2 \lambda_2^j$ for some constants $p_1, p_2 \in \cc$. In particular, 
$$\xi=\mu(\gamma_{\lambda_1 , \lambda_2}(2e_0)) = \mu ( \gamma_{\lambda_1 , \lambda_2} (h) ) 
= \frac{2}{\lambda_1 - \lambda_2} \mu(t^{-1}(t-\lambda_1)(t-\lambda_2))$$
if and only if $\frac{( \lambda_1 - \lambda_2 ) \xi}{2} = \frac{p_1}{\lambda_1} + \frac{p_2}{\lambda_2}$.  This implies that for every $p_1 \in \cc$, there is a unique $p_2 \in \cc$ such that $\xi = \mu(\gamma_{\lambda_1 , \lambda_2}(2e_0))$.  In other words, such a $\mu$ is completely determined by $p_1 \in \cc$.
\end{proof}

The result below summarizes some of the general connections between induced $\s_2$-modules and polynomial modules for $\Vir$.

\begin{thm} \label{thm:almostUnique}
Suppose $N$ is an $\s_2$-module that is induced from a simple module for a  nontrivial, proper subalgebra $\mathfrak a \subseteq \s_2$ such that $\mathfrak a \not\subseteq \B^+, \B^-$. 
\begin{itemize}
\item[(i)] If $\mathfrak a$ has codimension one, then there is a unique polynomial module $V^{f (t)}_\mu$ such that ${\rm Res}^{\Vir}_{\s_2} \, ( V^{f (t)}_\mu ) \cong N$.  
\item[(ii)] If $\mathfrak a$ has codimension two, then there is a unique polynomial $f(t)$ along with infinitely many $\mu$, in one-to-one correspondence with $\cc$, such that ${\rm Res}^{\Vir}_{\s_2} \, ( V^{f (t)}_\mu ) \cong N$. 
\end{itemize}
 In either case, the degree of $f(t)$ is the same as the codimension of $\mathfrak a$ in $\s_2$. 
\end{thm}

\begin{proof}
Lemmas \ref{lem:onedim} and \ref{lem:2dimSubs} describe all one-dimensional and two-dimensional subalgebras of $\s_2$. 
It follows from Lemma \ref{lem:inducedtwist} and Propositions \ref{prop:Res1}, \ref{prop:Res2} and \ref{prop:noIsomorphisms} and Lemma \ref{lem:muOptions} that the claims of the theorem apply to all subalgebras $\mathfrak a$ except the following:
\begin{itemize}
\item $\mathfrak n_\lambda$ if $\lambda=0$ (in which case $\mathfrak n_0= \mathfrak n^+ = \cc e$);
\item $\mathfrak n^-$;
\item $\mathfrak h_\lambda = \cc (h+2\lambda f)$;
\item $\mathfrak h_{\lambda_1, \lambda_2}$ if $\lambda_1=0$ (in which case $\mathfrak h_{\lambda_1, 0}=\cc(\frac{2}{\lambda_1} e-h)$) or $\lambda_2=0$ (in which case $\mathfrak h_{0, \lambda_2}=\cc(-\frac{2}{\lambda_2} e+h)$). 
\end{itemize}
It's clear that all subalgebras on this list are subalgebras of $\B^+$ or $\B^-$. Moreover, it follows from (\ref{eqn:gammalambda}) and (\ref{eqn:tau}) and Lemmas \ref{lem:onedim} and \ref{lem:2dimSubs} that all other nontrivial, proper subalgebras of $\s_2$ are not subalgebras of $\B^+$ or $\B^-$.
\end{proof}

\bigskip

\end{document}